 \newtheorem{thm}{Theorem}[section]
 \newtheorem{cor}[thm]{Corollary}
 \newtheorem{lem}[thm]{Lemma}
 \theoremstyle{definition}
 \newtheorem{defn}[thm]{Definition}
 \theoremstyle{remark}
 \newtheorem{rem}[thm]{Remark}
 \numberwithin{equation}{section}
\begin{document}

\newcommand{\id}{1\hspace{-0,9ex}1}
\newcommand{\sgn}{\text{sign}}
\newcommand{\wlim}{\text{w -}\lim}
%
%
%
%
%
%
%
%
%

\title[Asymptotic Results for a weighted $p$-Laplacian evolution Equation]
 {Asymptotic Results for Solutions of a weighted $p$-Laplacian evolution Equation with Neumann Boundary Conditions}

\author[Alexander Nerlich]{Alexander Nerlich}

\address{%
	Ulm University\\
	}

\email{alexander.nerlich@uni-ulm.de}

\subjclass{Primary 35B40; Secondary 47H20}

\keywords{weighted $p$-Laplacian evolution equation, asymptotic results for nonlinear semigroups, nonlocal diffusion, Neumann boundary conditions}

\date{June, 2017}

\begin{abstract}
The purpose of this paper is to investigate the time behavior of the solution of a weighted $p$-Laplacian evolution equation, given by 
\begin{align}
\label{eveq}
\begin{cases} u_{t} = \text{div} \left( \gamma |\nabla u|^{p-2}\nabla u \right) & \text{on } (0,\infty)\times  S , \\ 
\gamma|\nabla u|^{p-2}\nabla u\cdot\eta=0 & \text{on } (0,\infty)\times \partial S , \\
u(0,\cdot)=u_{0} & \text{on }   S ,\end{cases}
\end{align}
where $n \in \mathbb{N}\setminus \{1\}$, $p \in (1,\infty)\setminus \{2\}$, $S\subseteq \mathbb{R}^{n}$ is an open, bounded and connected set of class $C^{1}$, $\eta$ is the unit outer normal on $\partial  S $,  and $\gamma: S \rightarrow (0,\infty)$ is a bounded function which can be extended to an $A_{p}$-Muckenhoupt weight on $\mathbb{R}^{n}$.\\ 
It will be proven that the solution of (\ref{eveq}) converges in $L^{1}(S)$ to the average of the initial value $u_{0} \in L^{1}( S )$. Moreover, a conservation of mass principle, an extinction principle and a decay rate for the solution will be derived. 
\end{abstract}

\maketitle

\section{Introduction}
The initial value problem (\ref{eveq}) has been considered by F. Andreu, J.M. Maz\'{o}n,  J. Rossi and J. Toledo in \cite{main}, Section 3. More precisely it has been shown that this equation admits, for any integrable initial value, a unique entropy solution.\\ 
From the applied point of view, the solution $u$ can be used to model diffusion processes: One has some initially given quantity $u_{0}$ which changes over time due to an external force $\gamma$ and the resulting quantity at time $t$ is $u(t)$.\\ 
For example, as B. Birnir and J. Rowlett demonstrated in \cite{birnirtheory}, the solution $u$ of (\ref{eveq}) can be used to describe the evolution of a fluvial landscape $u_{0}$ (for example a hill) which changes over time due to rain that determines the water depth $\gamma$.\\ 

The basic technique used in \cite{main} to obtain the existence of a unique entropy solution of (\ref{eveq}) is to apply nonlinear semigroup theory. To be slightly more specific; the concept of entropy solution of (\ref{eveq})  is defined precisely in such a way that it coincides with the usual definition of strong solution of the evolution equation 
\begin{align}
\label{intro}
0 \in u^{\prime}(t)+\mathcal{A} u(t),~\text{a.e. } t \in (0,\infty) \text{ and } u(0)=u_{0},
\end{align}
where $\mathcal{A}:D( \mathcal{A} )\rightarrow 2^{L^{1}( S )}$ is a certain multi-valued operator to be specified later.\\

Once the existence of a unique strong solution of (\ref{intro}) has been recalled, which is subject of Section \ref{basicdef}, the results mentioned in the abstract will be proven.\\

The asymptotic results which will be proven, are formulated by means of nonlinear semigroup theory. Therefore, let $T(\cdot)u_{0}:[0,\infty)\rightarrow L^{1}( S )$ denote, for a given $u_{0}\in L^{1}( S )$, the uniquely determined strong solution of (\ref{intro}) corresponding to the initial value $u_{0}$. Moreover, let $\overline{(u_{0})}_{S}:=\frac{1}{\lambda( S )} \int \limits_{ S } u_{0} d\lambda$, where $\lambda$ denotes the Lebesgue measure.\\ 
Firstly, it will be proven that $T$ conserves mass, i.e. $\overline{(u_{0})}_{S}=\overline{(T(t)u_{0})}_{S}$, for all $t \in [0,\infty)$ and $u_{0} \in L^{1}( S ) $. In addition, one has
\begin{align}
\label{introbound_conv}
\lim \limits_{t \rightarrow \infty} ||T(t)u_{0}-\overline{(u_{0})}_{ S }||_{L^{q}( S )}=0,
\end{align}
for any  $u_{0}\in L^{q}( S )$ and  $q\in [1,\infty)$; as well as 
\begin{align}
\label{introbound_L1}
||T(t)u_{0}-\overline{(u_{0})}_{ S }||_{L^{1}( S )} \leq C ||u_{0}-\overline{(u_{0})}_{ S }||^{\frac{2}{p}}_{L^{2}( S )} \left(\frac{1}{t}\right)^{\frac{1}{p}},
\end{align}
for all $u_{0} \in L^{2}( S )$ and $t \in (0,\infty)$, where $C\geq 0$ is a constant (being determined explicitly later) depending only on $p$, $ S $ and $\gamma$. Actually, it will turn out that (\ref{introbound_L1}) is a corollary of a slightly stronger result which is more technical to formulate and will be postponed until Section \ref{expbounds}.\\
Moreover, it will be shown that even
\begin{align}
\label{introbound_sup}
||T(t)u_{0}-\overline{(u_{0})}_{ S }||_{L^{\infty}( S )} \leq  \hat{C} ||u_{0}-\overline{(u_{0})}_{ S }||^{\frac{2}{p}}_{L^{2}( S )} \left(\frac{1}{t}\right)^{\frac{1}{p}},
\end{align}
for all $t \in (0,\infty)$ and $u_{0} \in L^{p}( S )$, if $p$ is sufficiently larger than $n$, where $\hat{C}\geq 0$ is a constant (being determined explicitly later) depending only on $p$, $ S $ and $\gamma$. (Hereby "sufficiently" depends on the integrability of $\gamma$.)\\
Additionally, an extinction principle will be proven, i.e. if $p$ is sufficiently smaller than $n$ and if $u_{0}\in L^{2}( S )$, then there is a finite time $T^{\ast}$ such that $T(t)u_{0}$ is constantly the average of the initial value for any $t \geq T^{\ast}$. It will actually be possible to give an explicit formula for $T^{\ast}$.\\
Finally, these results show that, if $\gamma$ is sufficiently integrable and $n=2$ then the solutions extinct after finite time if $p \in (1,2)$ and $u_{0} \in L^{2}( S )$; and (\ref{introbound_sup}) holds if $p \in (2,\infty)$ and $u_{0} \in L^{p}( S )$.\\
Note that the considered initial value problem can be used to model the evolution of a fluvial landscape. Consequently, in this application one always has $n=2$ and $u_{0} \in L^{\infty}( S ) \subseteq L^{2}( S ) \cap L^{p}( S )$.\\ 

Before proceeding with a detailed derivation of all these results, some words on the literature are in order. Firstly, the monograph \cite{acmbook} by F. Andreu, V. Caselles and J.M. Maz\'{o}n deals with existence, uniqueness, asymptotic and qualitative results for many initial value problems. Even though the initial value problem considered here is not considered in this book, the asymptotic results there, served as an inspiration for the current paper.\\
Moreover, the monograph \cite{BenilanBook} by P. B\'{e}nilan, M. Crandall and A. Pazy is a detailed and comprehensive introduction to the general theory of nonlinear semigroups and evolution equation.

\section{Assumptions and preliminary results}
\label{basicdef}
Some notational preliminaries are in order: For any $m$-dimensional Borel measurable set $\Omega$, where $m \in \mathbb{N}$, $\mathfrak{B}(\Omega)$ denotes the Borel $\sigma$-algebra on this set. Moreover, if $\mu:\mathfrak{B}(\Omega) \rightarrow [0,\infty]$ is a measure and $q \in [1,\infty]$ then $L^{q}(\Omega,\mu;\mathbb{R}^{m})$ denotes the usual Lebesgue spaces and $||\cdot||_{L^{q}(\Omega,\mu;\mathbb{R}^{m})}$ denotes the canonical norm on these spaces.\\ 
If $m=1$ then $L^{q}(\Omega,\mu;\mathbb{R}^{m})$ will be abbreviated by $L^{q}(\Omega,\mu)$ and if $\mu$  is the Lebesgue measure then $L^{q}(\Omega)$ will be written. Of course the analogous convention applies to $||\cdot||_{L^{q}(\Omega,\mu;\mathbb{R}^{m})}$.\\
In addition, $L^{1}_{\text{Loc}}(\Omega)$, $L^{1}_{\text{Loc}}(\Omega;\mathbb{R}^{m})$ denote, if $\Omega$ is open, the space of locally Lebesgue integrable functions $f:\Omega\rightarrow \mathbb{R}$, $f:\Omega\rightarrow \mathbb{R}^{m}$ respectively.\\
Moreover, if $\Omega$ is open then $W^{1,1}_{\text{Loc}}(\Omega)$ denotes the space of weakly differentiable functions and $\nabla f$ denotes the weak derivative of any $f \in W^{1,1}_{\text{Loc}}(\Omega)$. In addition, $W^{1,q}(\Omega)$ denotes the Sobolev space of once weakly differentiable functions, such that the function and all of its weak derivatives are in $L^{q}(B)$.\\

If $(X,||\cdot||_{X})$ is a Banach space, then $W^{1,1}_{\text{Loc}}( (0,\infty);X)$ denotes the space of all functions $f:(0,\infty)\rightarrow X$ which are locally absolutely continuous and differentiable a.e. For an $f \in W^{1,1}_{\text{Loc}}((0,\infty);X) $ the function $f^{\prime}$ denotes the almost everywhere existing derivative of $f$. Moreover, $C( [0,\infty);X)$ denotes the space of all continuous functions $f:[0,\infty)\rightarrow X$ and $2^X$ denotes the power set of $X$. \\
If in addition $a \in (0,\infty)$ then
\begin{align*}
L^{1}([0,a];X):=\{f:[0,a]\rightarrow X|~f\text{ is strongly meas. and } \int \limits_{0} \limits^{a} ||f(t)||_{X}dt <\infty\}.
\end{align*} 
Let $B:X \rightarrow 2^{X}$ be a multi-valued operator, then its graph $G(B) \subseteq X \times X$ is defined by $G(B):=\{(x,\hat{x}):~\hat{x} \in Bx\}$. Moreover, it is clear that any set $\tilde{B} \subseteq X\times X$ uniquely defines an operator $B$, by $\hat{x} \in Bx$ if and only if $(x,\hat{x}) \in \tilde{B}$. Therefore, an operator and its graph will be denoted by the same latter.\\
Moreover, the domain of $B$ is defined by $D(B):=\{x \in X: Bx \neq \emptyset \}$ and $B$ is called single-valued, if $Bx$ contains precisely one element for any $x \in D(B)$. If $B$ is single valued, then the set $Bx$, containing only the element $\hat{x}$, is identified with this element, for any $x \in D(B)$.\\ 

Moreover, $\lambda$ denotes the Lebesgue measure and $|\cdot|$ the euclidean norm on $\mathbb{R}^{m}$. In addition, the canonical inner product of any $x,y\in \mathbb{R}^{m}$ is denoted by $x\cdot y$.\\

Finally, $A_{q}(\mathbb{R}^{m})$ denotes, for any $q \in (1,\infty)$, the class of \textit{Muckenhoupt weights}, i.e. $A_{q}(\mathbb{R}^{m})$ consists of all functions $\gamma_{0}:\mathbb{R}^{m}\rightarrow \mathbb{R}$ such that $\gamma_{0}>0$ a.e., $\gamma_{0} \in L^{1}_{\text{Loc}}(\mathbb{R}^{m})$ and
\begin{align*}
\sup \limits_{\substack{B \subseteq \mathbb{R}^{n}\\ B \text{ is a ball}}}\Big[ \frac{1}{\lambda(B)} \int \limits_{B} \gamma_{0}d \lambda \Big( \frac{1}{\lambda(B)} \int \limits_{B} \gamma_{0}^{\frac{1}{1-q}}d \lambda \Big)^{q-1}\Big]< \infty.
\end{align*} 

Now the assumptions on the quantities $ S $, $\gamma$ and $p$ mentioned in the introduction, will be made precise.\\
Here and in everything that follows let $n \in \mathbb{N}\setminus \{1\}$ and $\emptyset \neq  S  \subseteq \mathbb{R}^{n}$ be a non-empty, open, connected and bounded sets of class $C^{1}$. \\
Moreover, let $p \in (1,\infty)\setminus \{2\}$. We are not interested in the linear case $p=2$; particularly the regularization effect (see\cite{cao} Theorem 4.4), which is needed in the present paper, is not applicable if $p=2$. Therefore, this value for $p$ is excluded.\\ 
Additionally, let $\gamma: S   \rightarrow (0,\infty)$ be such that $\gamma \in L^{\infty}( S )$, $\gamma^{\frac{1}{1-p}} \in L^{1}( S)$ and assume that there is a $\gamma_{0} \in A_{p}(\mathbb{R}^{n})$ such that $\gamma_{0}|_{ S }=\gamma$ a.e. on $S$.\\
Furthermore, let $\nu : \mathfrak{B}( S ) \rightarrow [0,\infty)$ be the measure induced by $\gamma$, i.e. \linebreak$\nu(B):= \int \limits_{B}\gamma d \lambda$ for all $B \in \mathfrak{B}( S )$ and introduce the weighted Sobolev space
\begin{align*}
W_{\gamma }^{1,p}( S ):=\{f \in L^{p}( S ): ~\nabla f \in L^{p}( S ,\nu;\mathbb{R}^{n})\}. 
\end{align*} 
Now introduce $J_{0}$ as the space of all convex, lower semi-continuous functions $j:\mathbb{R}\rightarrow [0,\infty]$ fulfilling $j(0)=0$. Given $f,h \in L^{1}( S )$, one writes $f<<h$ whenever
\begin{align*}
\int \limits_{ S }j \circ f d\lambda \leq \int \limits_{ S }  j \circ h d \lambda,~\forall j \in J_{0}.
\end{align*}  
Moreover, an operator $B\subseteq L^{1}( S ) \times L^{1}( S )$ is called \textit{completely accretive} if $f-h<<f-h+\alpha (\hat{f}-\hat{h})$, for all $(f,\hat{f}),~(h,\hat{h})\in B$ and $\alpha \in (0,\infty)$. The reader is referred to \cite{cao} for a detailed discussion of the concept of complete accretivity.

\begin{rem}\label{generaliedwdlemma} In the sequel, $\tau_{k}:\mathbb{R}\rightarrow \mathbb{R}$, where $k \in (0,\infty)$, denotes the standard truncation function, i.e. $\tau_{k}(s):= s$, if $|s|< k$ and $\tau_{k}(s):= k\sgn(s)$, if $|s|\geq k$.  Moreover, if $f:S \rightarrow \mathbb{R}$ is Borel measurable and fulfills \linebreak$\tau_{k}(f) \in W^{1,1}_{\text{Loc}}(S)$ for all $k \in (0,\infty)$, then $\tilde{\nabla }f:S\rightarrow \mathbb{R}^{n}$, denotes the (up to equality a.e.) uniquely determined function fulfilling
\begin{align}
\label{generaliedwdtau}
\nabla \tau_{k}(f)=\tilde{\nabla }f\id_{\{|f|<k\}},~\forall k \in (0,\infty) 
\end{align}
a.e. on $S$. The function $\tilde{\nabla }f$ is called the \textit{generalized weak derivative} of $f$. Note that if $f:S\rightarrow \mathbb{R}$ is generalized weakly differentiable, then $f\in W^{1,1}_{\text{Loc}}(S)$ if and only if  $\tilde{\nabla }f \in L^{1}_{\text{Loc}}(S;\mathbb{R}^{n})$; and in this case $\tilde{\nabla }f=\nabla f$. Cf. \cite{gwd}, for these and further properties.
\end{rem}

The following operators are considered in \cite{main} to show that (\ref{eveq}) admits a unique entropy solution.

\begin{defn}\label{defo} Let $A \subseteq L^{1}( S ) \times L^{1}( S )$ be defined by: $(f,\hat{f}) \in A$ if and only if the following assertions hold.
	\begin{enumerate}
		\item $f \in W^{1,p}_{\gamma}( S ) \cap L^{\infty}( S )$. 
		\item $\hat{f} \in L^{1}( S )$.
		\item $\int \limits_{ S}  \gamma|\nabla f|^{p-2}\nabla f\cdot\nabla \varphi  d \lambda = \int \limits_{ S } \hat{f} \varphi d \lambda$ for all $\varphi\in W^{1,p}_{\gamma }( S )\cap L^{\infty}( S )$.
	\end{enumerate}
Moreover, let $\mathcal{A} \subseteq L^{1}( S ) \times L^{1}( S )$ be defined by: $(f,\hat{f})\in \mathcal{A}$ if and only if the following assertions hold.
\begin{enumerate}\setcounter{enumi}{3}
	\item $f,\hat{f}\in L^{1}( S )$.
	\item $\tau_{k}(f)\in W^{1,p}_{\gamma}( S )$ for all $k\in (0,\infty)$ .
	\item $\int \limits_{ S }\gamma  |\tilde{\nabla }f|^{p-2} \tilde{\nabla }f\cdot\nabla (\tau_{k}(f-\varphi))d \lambda \leq \int \limits_{ S }\hat{f}\tau_{k}(f-\varphi)d\lambda$ for all $k\in (0,\infty)$ and $\varphi \in W^{1,p}_{\gamma}( S )\cap L^{\infty}( S )$.
\end{enumerate}
\end{defn}

Finally, for the reader's convenience, the following result will be extracted from \cite{main}, Section 3. This existence and uniqueness result is fundamental for that what follows.

\begin{thm}\label{mainold} $\mathcal{A}$  is completely accretive, m-accretive and the closure of $A$. Moreover, $D(A)$ is a dense subset of $(L^{1}( S ),||\cdot||_{L^{1}( S )})$. Consequently, the evolution equation
	\begin{align}
	\label{mainoldeq}
	0 \in u^{\prime}(t)+\mathcal{A}u(t) \text{ for a.e. }t\in (0,\infty) \text{ and } u(0)=u_{0}
	\end{align}
	has for a given $u_{0} \in L^{1}( S )$ precisely one mild solution. Moreover, this mild solution is also the unique strong solution. Hence, there is a semigroup $(T(t))_{t\geq 0}$, with $T(t):L^{1}( S )\rightarrow L^{1}( S )$, fulfilling 
	\begin{align*}
	T(\cdot)u_{0} \in C([0,\infty);L^1(S)) \cap  W^{1,1}_{\text{Loc}}((0,\infty);L^{1}( S ))
	\end{align*}
	and
	\begin{align*}
	0 \in T^{\prime}(t)u_{0}+\mathcal{A}T(t)u_{0}\text{, }T(t)u_{0} \in D(\mathcal{A})\text{ a.e. } t \in (0,\infty) \text{ and } T(0)u_{0}=u_{0}
	\end{align*}
	for all $u_{0} \in L^{1}(S)$.
\end{thm}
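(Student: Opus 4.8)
The plan is to place $\mathcal{A}$ into the generation theory for completely accretive operators developed in \cite{cao} and \cite{BenilanBook}, exactly as in \cite{main}, and then to read off the semigroup and the solvability of (\ref{mainoldeq}) from that abstract theory. Concretely I would proceed in the order: (i) complete accretivity of $A$; (ii) the range condition for $A$; (iii) $m$-complete accretivity of the closure $\overline{A}$; (iv) the identification $\mathcal{A}=\overline{A}$; (v) density of $D(A)$; and (vi) the invocation of the Crandall--Liggett--B\'enilan theorem together with the regularising effect special to completely accretive operators.

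For step (i) I would argue directly from Definition \ref{defo}. Given $(f,\hat{f}),(h,\hat{h})\in A$, I subtract the two weak identities in (3) and test with a $\varphi$ built from a truncation of $f-h$; using the elementary monotonicity inequality $(|\xi|^{p-2}\xi-|\zeta|^{p-2}\zeta)\cdot(\xi-\zeta)\ge 0$ for $\xi,\zeta\in\mathbb{R}^{n}$ one obtains
\[
\int_{S}(\hat{f}-\hat{h})\,T(f-h)\,d\lambda\ge 0
\]
for every nondecreasing Lipschitz $T$ with $T(0)=0$, which is precisely the differential characterisation of complete accretivity recorded in \cite{cao}. Reading the same computation off the inequality in (6) yields complete accretivity of $\mathcal{A}$.

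Step (ii) is the main obstacle. For $\alpha\in(0,\infty)$ and $g\in L^{2}(S)\cap L^{\infty}(S)$ I must solve the resolvent problem $f+\alpha\hat{f}=g$ with $(f,\hat{f})\in A$, i.e.\ the weighted quasilinear Neumann problem whose weak form is condition (3) with $\hat{f}=(g-f)/\alpha$. I would obtain $f$ as the unique minimiser over $W^{1,p}_{\gamma}(S)$ of the strictly convex, coercive functional
\[
E(v)=\frac{\alpha}{p}\int_{S}\gamma\,|\nabla v|^{p}\,d\lambda+\frac{1}{2}\int_{S}(v-g)^{2}\,d\lambda,
\]
the direct method being applicable because the hypotheses on $\gamma$ (an $A_{p}$-weight with $\gamma^{1/(1-p)}\in L^{1}(S)$) make $W^{1,p}_{\gamma}(S)$ a reflexive space with weighted Sobolev and Poincar\'e inequalities and with the embedding $W^{1,p}_{\gamma}(S)\hookrightarrow W^{1,1}(S)$ at one's disposal. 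A truncation and comparison argument gives $\|f\|_{L^{\infty}(S)}\le\|g\|_{L^{\infty}(S)}$, so condition (1) holds and $\hat{f}=(g-f)/\alpha\in L^{2}(S)\subseteq L^{1}(S)$; hence $L^{2}(S)\cap L^{\infty}(S)\subseteq R(I+\alpha A)$. Since this set is dense in $L^{1}(S)$ and $(I+\alpha A)^{-1}$ is a contraction by (i), the range of $I+\alpha\overline{A}$ is all of $L^{1}(S)$, which is step (iii): $\overline{A}$ is $m$-accretive.

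For step (iv) one checks $A\subseteq\mathcal{A}$ (a pair satisfying (1)--(3) satisfies (4)--(6) after testing with $\varphi$ and truncating) and that $\mathcal{A}$ is accretive and closed; then $\overline{A}\subseteq\mathcal{A}$, and since $\overline{A}$ is already $m$-accretive while $\mathcal{A}$ is accretive, maximality of $m$-accretive operators forces $\mathcal{A}=\overline{A}$. Step (v) is immediate, because the minimisers produced in (ii) lie in $D(A)$ and their totality is dense in $L^{1}(S)$. Finally, with $\mathcal{A}=\overline{A}$ densely defined and $m$-accretive, the Crandall--Liggett theorem (see \cite{BenilanBook}) produces the contraction semigroup $(T(t))_{t\ge0}$ and the unique mild solution of (\ref{mainoldeq}) for every $u_{0}\in L^{1}(S)$; complete accretivity then upgrades this mild solution to a strong solution with $T(\cdot)u_{0}\in W^{1,1}_{\text{Loc}}((0,\infty);L^{1}(S))$, via the regularising effect for completely accretive operators in \cite{cao} and \cite{acmbook}. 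The delicate points throughout are the range condition of step (ii) and the passage from mild to strong solutions, both of which hinge essentially on the $A_{p}$-structure of $\gamma$.
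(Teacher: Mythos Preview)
The paper does not prove this theorem at all: it is stated without proof and explicitly ``extracted from \cite{main}, Section 3'' as a known result underpinning the rest of the paper. So there is no paper-proof to compare against; your sketch is essentially a reconstruction of the argument in \cite{main}, and the outline (complete accretivity of $A$, range condition via minimisation of a convex functional on $W^{1,p}_{\gamma}(S)$, $m$-accretivity of the closure, identification $\mathcal{A}=\overline{A}$, density, Crandall--Liggett) is the correct route and matches that reference.

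One point deserves sharpening. In step (vi) you write that ``complete accretivity then upgrades this mild solution to a strong solution \ldots\ via the regularising effect for completely accretive operators''. Complete accretivity by itself does not yield this upgrade; the regularising effect of \cite{cao}, Theorem 4.4 requires in addition that the operator be positively homogeneous of some degree $\neq 1$, here $\mathcal{A}(\alpha v)=\alpha^{p-1}\mathcal{A}v$ with $p\neq 2$. This is exactly why the paper excludes $p=2$ from the outset. Your closing remark that the mild-to-strong passage ``hinges essentially on the $A_{p}$-structure of $\gamma$'' is therefore misplaced: the $A_{p}$ hypothesis is needed for step (ii) (reflexivity and embeddings of $W^{1,p}_{\gamma}(S)$, hence existence of the resolvent), whereas the regularisation in step (vi) rests on the homogeneity $p\neq 2$.
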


In what follows, $(T(t))_{t \geq 0}$ denotes the strongly continuous semigroup introduced in Theorem \ref{mainold} and $T^{\prime}(\cdot)u_{0}$ denotes, for any $u_{0} \in L^{1}(S)$, the derivative of $T(\cdot)u_{0}$, which exists almost everywhere on $(0,\infty)$. Note that the null-set on which $T(\cdot)u_{0}$ is not differentiable depends on $u_{0}$.\\
In the following sections, initial values are simply denoted by $u$, $v$, etc. and no longer by $u_{0}$, $v_{0}$, etc.

\section{Conservation of mass and other basic properties}
\label{com} 
The purpose of this section is to derive some basic properties of $(T(t))_{ t \geq 0 }$ among them, the conservation of mass principle.\\

For any $u \in L^{1}( S )$, let $\overline{(u)}_{ S }$ denote its \textit{average}, i.e. $\overline{(u)}_{ S }:= \frac{1}{\lambda( S )} \int \limits_{ S } u d \lambda$. By slightly abusing notation, the constant function mapping from $ S $ to $\mathbb{R}$, which takes only the value $\overline{(u)}_{ S }$  will also be denoted by  $\overline{(u)}_{ S }$.\\

\begin{lem}\label{domcharlemma} $A$ is single-valued. Moreover, if $f \in D(\mathcal{A})\cap L^{\infty}( S )$ and $\hat{f} \in \mathcal{A}f$, then $f \in D(A)$ and $\hat{f}=Af$.
\end{lem}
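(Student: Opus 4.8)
The plan is to establish the two assertions separately: single-valuedness by a density argument, and the identification $\hat{f}=Af$ by upgrading the variational inequality defining $\mathcal{A}$ to the variational equality defining $A$. For single-valuedness, I would suppose $(f,\hat{f}_1),(f,\hat{f}_2)\in A$ and subtract the two instances of condition (3), obtaining $\int_{S}(\hat{f}_1-\hat{f}_2)\varphi\,d\lambda=0$ for every $\varphi\in W^{1,p}_{\gamma}(S)\cap L^{\infty}(S)$. Since $\gamma\in L^{\infty}(S)$ and $S$ is bounded, $\nu(S)<\infty$, so every $\varphi\in C_{c}^{\infty}(S)$ lies in $W^{1,p}_{\gamma}(S)\cap L^{\infty}(S)$; as $\hat{f}_1-\hat{f}_2\in L^{1}(S)$, the fundamental lemma of the calculus of variations forces $\hat{f}_1=\hat{f}_2$ a.e., whence $A$ is single-valued.

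For the second assertion, let $f\in D(\mathcal{A})\cap L^{\infty}(S)$ and $\hat{f}\in\mathcal{A}f$. I would fix $k>\|f\|_{L^{\infty}(S)}$, so that $\tau_{k}(f)=f$; then condition (5) immediately gives $f\in W^{1,p}_{\gamma}(S)$, establishing conditions (1) and (2) for $A$. Moreover, the defining relation (\ref{generaliedwdtau}) for the generalized weak derivative yields $\nabla f=\nabla\tau_{k}(f)=\tilde{\nabla}f\,\id_{\{|f|<k\}}=\tilde{\nabla}f$ a.e., since $|f|<k$ a.e.\ (this is the point where Remark \ref{generaliedwdlemma} is invoked). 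It then remains to promote the inequality in condition (6) to the equality in condition (3).

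The key step is the choice of test function. Given an arbitrary $\psi\in W^{1,p}_{\gamma}(S)\cap L^{\infty}(S)$, I would insert $\varphi:=f-\psi$ into condition (6); this is admissible because $f-\psi\in W^{1,p}_{\gamma}(S)\cap L^{\infty}(S)$. Then $f-\varphi=\psi$, and choosing $k>\|\psi\|_{L^{\infty}(S)}$ makes $\tau_{k}(f-\varphi)=\psi$, so that (6), after replacing $\tilde{\nabla}f$ by $\nabla f$, reduces to
\[
\int_{S}\gamma|\nabla f|^{p-2}\nabla f\cdot\nabla\psi\,d\lambda\le\int_{S}\hat{f}\,\psi\,d\lambda .
\]
Applying the very same inequality with $\psi$ replaced by the admissible test function $-\psi$ gives the reverse inequality, hence equality for every $\psi$. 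This is exactly condition (3), so $(f,\hat{f})\in A$; in particular $f\in D(A)$, and single-valuedness yields $\hat{f}=Af$.

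I expect the only delicate points to be bookkeeping rather than conceptual: verifying that all integrals are finite (via H\"older, using $|\nabla f|^{p-1}\in L^{p/(p-1)}(S,\nu)$ together with $\nabla\psi\in L^{p}(S,\nu;\mathbb{R}^{n})$, and $\hat{f}\in L^{1}(S)$ with $\psi\in L^{\infty}(S)$) and carefully justifying the identification $\tilde{\nabla}f=\nabla f$ from Remark \ref{generaliedwdlemma}. The symmetrization $\psi\mapsto-\psi$ that converts the one-sided inequality into an equality is the conceptual heart of the argument, but it becomes routine once the substitution $\varphi=f-\psi$ and the boundedness of $f$ have been used to remove the truncation.
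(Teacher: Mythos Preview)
Your proposal is correct and follows essentially the same route as the paper: you identify $f=\tau_{k}(f)$ for $k>\|f\|_{L^{\infty}(S)}$ to get $f\in W^{1,p}_{\gamma}(S)$ and $\tilde{\nabla}f=\nabla f$, insert the test function $\varphi=f-\psi$ into condition (6), pick $k>\|\psi\|_{L^{\infty}(S)}$ to eliminate the truncation, and then symmetrize via $\psi\mapsto-\psi$ to obtain equality. The only difference is that for single-valuedness you spell out the density argument via $C_{c}^{\infty}(S)\subseteq W^{1,p}_{\gamma}(S)\cap L^{\infty}(S)$, whereas the paper simply records the orthogonality relation and leaves the conclusion implicit.
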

\begin{proof} It is plain that $A$ is single-valued, since $(f,\hat{f}),~(f,\tilde{f})\in A$ implies
	\begin{align*}
	\int \limits_{S}(\hat{f}-\tilde{f})\varphi d \lambda = 0,~\forall \varphi \in W^{1,p}_{\gamma}(S)\cap L^{\infty}(S).
	\end{align*}
	Now let $f  \in D(\mathcal{A})\cap L^{\infty}( S )$ and $\hat{f} \in \mathcal{A}f$, then $\tau_{k}(f) \in W^{1,p}_{\gamma}( S )$  for all $k \in (0,\infty)$. Consequently, $f \in W^{1,p}_{\gamma}( S ) \cap L^{\infty}( S )$ by choosing $k>||f||_{L^{\infty}( S )}$. Hence the claim follows if 
	\begin{align}
	\label{eveqlemmaproof3}
	\int \limits_{ S }  \gamma|\nabla f|^{p-2}\nabla f\cdot\nabla \varphi d\lambda= \int \limits_{ S } \hat{f} \varphi d \lambda,~\forall \varphi \in W^{1,p}_{\gamma}( S ) \cap L^{\infty}( S ).
	\end{align}
	Proof of (\ref{eveqlemmaproof3}). It follows from the definition of $\mathcal{A}$ that 
	\begin{align*}
	\int \limits_{ S }\gamma  |\tilde{\nabla }f|^{p-2} \tilde{\nabla }f\cdot\nabla (\tau_{k}(f-\varphi))d \lambda \leq \int \limits_{ S }\hat{f}\tau_{k}(f-\varphi)d\lambda,
	\end{align*}
	for all $\varphi \in W^{1,p}_{\gamma}( S )\cap L^{\infty}( S )$ and $k \in (0,\infty)$.\\
	Observe that $f \in W^{1,p}_{\gamma}( S ) \cap L^{\infty}( S )$ implies $\tilde{\nabla }f=\nabla f$ on $ S $ (see Remark \ref{generaliedwdlemma}) and that $\varphi=f-\tilde{\varphi}$, where $\tilde{\varphi} \in W^{1,p}_{\gamma}( S )\cap L^{\infty}( S )$, is a valid choice as a test function in the previous equation, hence
	\begin{align}
	\label{eveqlemmaproof5}
	\int \limits_{ S }\gamma  |\nabla f|^{p-2} \nabla f\cdot\nabla (\tau_{k}(\tilde{\varphi}))d \lambda \leq \int \limits_{ S }\hat{f}\tau_{k}(\tilde{\varphi})d\lambda,
	\end{align}
	for all $\tilde{\varphi} \in W^{1,p}_{\gamma}( S )\cap L^{\infty}( S )$ and $k \in (0,\infty)$.\\
	Now (\ref{eveqlemmaproof5}) yields, by choosing  $k>||\tilde{\varphi}||_{L^{\infty}( S )}$ for a given $\tilde{\varphi} \in W^{1,p}_{\gamma}( S )\cap L^{\infty}( S )$, that
	\begin{align}
	\label{eveqlemmaproof6}
	\int \limits_{ S }\gamma  |\nabla f|^{p-2} \nabla f\cdot\nabla \tilde{\varphi}d \lambda \leq \int \limits_{ S }\hat{f}\tilde{\varphi}d\lambda,~\forall\tilde{\varphi} \in W^{1,p}_{\gamma}( S )\cap L^{\infty}( S ).
	\end{align}
	Conclusively the claim follows since $\tilde{\varphi}$ can be replaced by $-\tilde{\varphi}$ as a test function in (\ref{eveqlemmaproof6}).
\end{proof}

\begin{rem}\label{infcontractionrema} As it turns out, the preceding lemma is only useful for our purposes if one can show the following: If $v\in L^{1}( S )$, $w \in L^{\infty}( S )$ and $v << w$, then $||v||_{L^{\infty}( S )} \leq ||w||_{L^{\infty}( S )}$.\\
	In fact, choosing $j(x):=\max(|x|-||w||_{L^{\infty}( S )},0)$, $x\in \mathbb{R}$, yields
	\begin{align*}
	0 \leq \int \limits_{ S } \max(|v|-||w||_{L^{\infty}( S )},0)d \lambda \leq \int \limits_{ S } \max(|w|-||w||_{L^{\infty}( S )},0)d \lambda=0,
	\end{align*}
	if $ v<<w $ and consequently $|v|\leq||w||_{L^{\infty}( S )}$ a.e. on $ S $.  
\end{rem}

\begin{lem}\label{nicealemma} The following assertions hold. 
	\begin{enumerate}
		\item \label{l}  $T(t)u-T(t)v<<u-v$ for all $u,~v \in L^{1}( S )$ and  $t \in [0,\infty)$.
		\item  $T(t)u<<u$ for all $u \in L^{1}( S )$ and  $t \in [0,\infty)$.
		\item $||T(t)w||_{L^{\infty}( S )} \leq ||w||_{L^{\infty}( S )}$ for every  $w \in L^{\infty}( S )$ and every $t \in [0,\infty)$.
		\item $T(t)w \in D(A)$ and $-T^{\prime}(t)w=AT(t)w$ for every  $w \in L^{\infty}( S )$ and almost every $t \in (0,\infty)$.
	\end{enumerate}
\end{lem}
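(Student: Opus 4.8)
The plan is to establish the four assertions in the listed order, deriving each from its predecessors together with the general theory of completely accretive, m-accretive operators recalled in Theorem \ref{mainold}.

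For (1) I would first show that the resolvents $J_{\lambda}:=(I+\lambda \mathcal{A})^{-1}$, which are single-valued and defined on all of $L^{1}(S)$ because $\mathcal{A}$ is m-accretive, are \emph{complete contractions}, i.e. $J_{\lambda}u-J_{\lambda}v<<u-v$ for all $u,v\in L^{1}(S)$ and $\lambda>0$. This is immediate from complete accretivity: writing $u_{\lambda}=J_{\lambda}u$ and $v_{\lambda}=J_{\lambda}v$, one has $(u_{\lambda},\lambda^{-1}(u-u_{\lambda})),(v_{\lambda},\lambda^{-1}(v-v_{\lambda}))\in \mathcal{A}$, so applying the defining inequality of complete accretivity with $\alpha=\lambda$ collapses the right-hand side to $u-v$. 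Next I would invoke the exponential (Crandall--Liggett) formula $T(t)u=\lim_{n\to\infty}J_{t/n}^{n}u$ in $L^{1}(S)$, which is available since $\mathcal{A}$ is m-accretive and the mild solution coincides with the strong one. Because $<<$ is transitive ($\int j\circ a\leq\int j\circ b\leq\int j\circ c$ gives $a<<c$), the composition $J_{t/n}^{n}$ inherits the complete contraction property, so $J_{t/n}^{n}u-J_{t/n}^{n}v<<u-v$ for every $n$. Finally, passing an $L^{1}$-convergent (hence a.e.-convergent along a subsequence) approximating sequence through the inequality $\int_{S}j\circ(J_{t/n}^{n}u-J_{t/n}^{n}v)\,d\lambda\leq\int_{S}j\circ(u-v)\,d\lambda$ and using Fatou's lemma together with the lower semicontinuity of each $j\in J_{0}$ yields (1).

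Assertion (2) is the special case $v=0$ of (1), so it suffices to check that $0$ is a stationary point, i.e. $T(t)0=0$. This holds because $(0,0)\in \mathcal{A}$: conditions (4)--(6) of Definition \ref{defo} are trivially satisfied by $f=\hat{f}=0$, the inequality in (6) reducing to $0\leq 0$. Hence the constant function $t\mapsto 0$ solves (\ref{mainoldeq}) for $u_{0}=0$, and uniqueness forces $T(t)0=0$. Assertion (3) then follows by combining (2) with Remark \ref{infcontractionrema}: for $w\in L^{\infty}(S)$ one has $T(t)w<<w$ with $w\in L^{\infty}(S)$, so the remark (applied to $v=T(t)w$) gives $\|T(t)w\|_{L^{\infty}(S)}\leq\|w\|_{L^{\infty}(S)}$. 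For (4), assertion (3) shows $T(t)w\in L^{\infty}(S)$, whence $T(t)w\in D(\mathcal{A})\cap L^{\infty}(S)$ and $-T^{\prime}(t)w\in \mathcal{A}T(t)w$ for a.e. $t$ by Theorem \ref{mainold}; Lemma \ref{domcharlemma} (with $f=T(t)w$, $\hat{f}=-T^{\prime}(t)w$) then upgrades this to $T(t)w\in D(A)$ and $-T^{\prime}(t)w=AT(t)w$.

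I expect the genuine work to lie in assertion (1), specifically in transferring the complete contraction property from the resolvents to the semigroup. This is the point where one must appeal to the exponential formula and verify that the order relation $<<$ is stable both under composition (transitivity) and under $L^{1}$-limits (Fatou combined with lower semicontinuity of the functions in $J_{0}$); the remaining three assertions are essentially bookkeeping once (1) is in hand.
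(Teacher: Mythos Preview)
Your proposal is correct and follows the same logical chain as the paper: (1) is the key step, (2) follows by taking $v=0$ once one knows $T(t)0=0$, (3) is (2) combined with Remark~\ref{infcontractionrema}, and (4) is (3) together with Theorem~\ref{mainold} and Lemma~\ref{domcharlemma}. The only difference is that for (1) the paper simply cites \cite{cao}, Prop.~4.1, whereas you spell out the standard argument (resolvents are complete contractions by complete accretivity, this persists under composition, and passes to the Crandall--Liggett limit via Fatou); your argument is essentially a reconstruction of that cited result, not an alternative route.
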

\begin{proof} The first assertion follows from \cite{cao}, Prop. 4.1. Moreover, it is plain that $0 \in D(A)$ and $A0=0$ which clearly implies $0 \in D(\mathcal{A})$ and $0 \in \mathcal{A}0$. This yields that $T(t)(0)=0$ for all $t \in [0,\infty)$. Consequently, the second assertions holds as  well.\\
	The third assertion follows by combining the second and Remark \ref{infcontractionrema}.\\
	Finally, Theorem \ref{mainold}, Lemma \ref{domcharlemma} and the third assertion yield the fourth.
\end{proof}

\begin{lem}\label{comlemma1} Let $u \in L^{1}( S )$, then $\overline{(T(t)u)}_S=\overline{(u)}_S$ for every $t \geq 0$.
\end{lem}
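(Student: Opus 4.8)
The plan is to first establish the claim for bounded initial values $w \in L^{\infty}(S)$ and then to pass to an arbitrary $u \in L^{1}(S)$ by density together with the $L^{1}$-contractivity of the semigroup. Fix $w \in L^{\infty}(S)$ and consider the bounded linear functional $\phi: L^{1}(S) \to \mathbb{R}$, $\phi(g) := \int_{S} g\, d\lambda$, so that $\overline{(g)}_{S} = \phi(g)/\lambda(S)$. Since $T(\cdot)w \in C([0,\infty);L^{1}(S)) \cap W^{1,1}_{\text{Loc}}((0,\infty);L^{1}(S))$ by Theorem \ref{mainold}, the scalar function $t \mapsto \phi(T(t)w)$ is continuous on $[0,\infty)$ and locally absolutely continuous on $(0,\infty)$; because $\phi$ is bounded and linear it commutes with the (Bochner) derivative, giving $\frac{d}{dt}\phi(T(t)w) = \phi(T^{\prime}(t)w)$ for almost every $t \in (0,\infty)$. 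It therefore suffices to show that this derivative vanishes almost everywhere.

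The key step exploits Lemma \ref{nicealemma}, assertion (4), which states that $T(t)w \in D(A)$ and $-T^{\prime}(t)w = AT(t)w$ for almost every $t$. Writing $f := T(t)w$ and $\hat{f} := AT(t)w$ for such a $t$, the defining identity in part (3) of Definition \ref{defo} reads $\int_{S} \gamma |\nabla f|^{p-2}\nabla f \cdot \nabla \varphi\, d\lambda = \int_{S} \hat{f}\varphi\, d\lambda$ for every $\varphi \in W^{1,p}_{\gamma}(S) \cap L^{\infty}(S)$. The constant function $\varphi \equiv 1$ is admissible, since it is bounded and has vanishing weak gradient, so that $\nabla \varphi = 0 \in L^{p}(S,\nu;\mathbb{R}^{n})$ and hence $1 \in W^{1,p}_{\gamma}(S) \cap L^{\infty}(S)$. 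With this choice the left-hand side is zero, whence $\phi(\hat{f}) = \int_{S} \hat{f}\, d\lambda = 0$, i.e. $\phi(T^{\prime}(t)w) = -\phi(AT(t)w) = 0$. Consequently $t \mapsto \phi(T(t)w)$ has a.e. vanishing derivative and is locally absolutely continuous, hence constant on $(0,\infty)$, and by continuity at $t = 0$ we obtain $\phi(T(t)w) = \phi(w)$, that is $\overline{(T(t)w)}_{S} = \overline{(w)}_{S}$, for all $t \geq 0$.

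To conclude, let $u \in L^{1}(S)$ be arbitrary and pick $w_{m} \in L^{\infty}(S)$ with $w_{m} \to u$ in $L^{1}(S)$, which is possible since $S$ is bounded. Choosing $j(x) = |x|$ in Lemma \ref{nicealemma}, assertion (1), shows $\|T(t)u - T(t)w_{m}\|_{L^{1}(S)} \leq \|u - w_{m}\|_{L^{1}(S)}$, so $T(t)w_{m} \to T(t)u$ in $L^{1}(S)$; continuity of $\phi$ then yields $\phi(T(t)u) = \lim_{m} \phi(T(t)w_{m}) = \lim_{m} \phi(w_{m}) = \phi(u)$, which is the asserted conservation of mass. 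I expect the only delicate point to be the justification that $\phi$ may be interchanged with the time derivative, which rests on the absolute continuity provided by Theorem \ref{mainold} and the boundedness of $\phi$; the algebraic heart of the argument, the annihilation of the right-hand side by testing against constants, is immediate.
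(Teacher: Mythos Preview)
Your proof is correct and follows essentially the same route as the paper: reduce to a dense class of initial data, differentiate the mass functional, and annihilate the derivative by plugging the constant test function $\varphi\equiv 1$ into the defining identity of $A$ via Lemma \ref{nicealemma}. The only cosmetic difference is that the paper works with $u\in D(A)$ and invokes Lipschitz continuity of the trajectory, whereas you work with $w\in L^{\infty}(S)$ and rely directly on the local absolute continuity from Theorem \ref{mainold}; both lead to the same computation.
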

\begin{proof} Firstly, Lemma \ref{nicealemma} yields that it suffices to prove the claim for \linebreak$u \in D( A )$, since this is according to Theorem \ref{mainold} a dense subset of \linebreak$(L^{1}(S)||\cdot||_{L^{1}(S)})$.
	So let $u \in D(A)$ be given. Moreover, introduce $\tau \in (0,\infty)$ and $f:[0,\tau]\rightarrow \mathbb{R}$ by $f(t):=\int \limits_{S}T(t)ud\lambda$, for all $t \in [0,\tau]$.\\
	It follows from \cite{BenilanBook}, Lemma 7.8 that $(T(\cdot)u)|_{[0,\tau]}$ is Lipschitz continuous which obviously implies that $f$ is Lipschitz continuous as well. Moreover, it is plain that $f^{\prime}(t)=\int \limits_{S}T^{\prime}(t)ud\lambda$.\\
	In addition, note that $D(A)\subseteq L^{\infty}(S)$ which yields by the aid of Lemma \ref{nicealemma} that
	\begin{align*}
	f^{\prime}(t) = - \int \limits_{S}\gamma |\nabla T(t)u|^{p-2}\nabla T(t)u\cdot\nabla \varphi d\lambda = 0,
	\end{align*}
	where $\varphi:S\rightarrow \mathbb{R}$ denotes the function which is constantly one.\\
	Consequently, $f$ is constant and therefore $\overline{(u)}=\overline{(T(t)u)}$ for all $t \in [0,\tau]$ which gives the claim as $\tau$ is arbitrary.
\end{proof}

\section{Upper bounds and asymptotic results}
\label{expbounds}
The purpose of this section is to prove the results (\ref{introbound_conv}), (\ref{introbound_L1}) and (\ref{introbound_sup}) mentioned in the introduction. Actually, it will turn out that (\ref{introbound_L1}) is a corollary of a slightly stronger result.

\begin{lem}\label{additionlemma} Let $u \in L^{1}( S )$ and $\varphi: S  \rightarrow \mathbb{R}$ be a constant function. Then
	\begin{align}
	\label{additivitylemmaeq}
	T(t)(u+\varphi)=T(t)(u)+\varphi,~\forall t \in [0,\infty).
	\end{align}
	Consequently, if $T(\cdot)u$ is differentiable in $t \in (0,\infty)$, then $T(\cdot)(u+\varphi)$ is differentiable in $t$ and $T^{\prime}(t)(u+\varphi)=T^{\prime}(t)u$.
\end{lem}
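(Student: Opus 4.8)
The plan is to show that adding a constant to the initial value simply translates the entire solution by that constant, which is intuitively clear because the equation only involves $\nabla u$ (and constants have zero gradient). The cleanest route is to verify that $t\mapsto T(t)u+\varphi$ solves the same evolution equation \eqref{mainoldeq} but with initial value $u+\varphi$, and then invoke the uniqueness statement of Theorem \ref{mainold}.

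First I would reduce the constant $\varphi$ to its single value; by abuse of notation write $\varphi\in\mathbb{R}$ for the constant. The key structural fact I would establish is that the operator $\mathcal{A}$ is invariant under adding constants, namely that $(f,\hat f)\in\mathcal{A}$ if and only if $(f+\varphi,\hat f)\in\mathcal{A}$ for every constant $\varphi$. This follows directly from Definition \ref{defo}: conditions (4) and (5) are preserved because $\tau_k(f+\varphi)$ and $\tau_k(f)$ differ only in the truncation threshold and $\widetilde{\nabla}(f+\varphi)=\widetilde{\nabla}f$ (adding a constant does not change the generalized weak derivative), and in condition (6) the quantity $\tau_k\bigl((f+\varphi)-\psi\bigr)$ for a test function $\psi$ runs over exactly the same set as $\tau_k(f-\tilde\psi)$ does (replace the admissible test function $\psi$ by $\psi+\varphi$, which is again in $W^{1,p}_{\gamma}(S)\cap L^{\infty}(S)$). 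Hence $\mathcal{A}(f+\varphi)=\mathcal{A}f$ as sets.

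Granting this invariance, set $v(t):=T(t)u+\varphi$. Then $v\in C([0,\infty);L^1(S))\cap W^{1,1}_{\text{Loc}}((0,\infty);L^1(S))$ since adding a fixed constant does not affect continuity or absolute continuity, and $v'(t)=T'(t)u$ for a.e.\ $t$. For a.e.\ $t$ we have $T(t)u\in D(\mathcal{A})$ with $-T'(t)u\in\mathcal{A}T(t)u$; by the invariance just proved, $v(t)=T(t)u+\varphi\in D(\mathcal{A})$ and $-v'(t)=-T'(t)u\in\mathcal{A}T(t)u=\mathcal{A}v(t)$. Together with $v(0)=u+\varphi$, this exhibits $v$ as a strong solution of \eqref{mainoldeq} for the initial value $u+\varphi$. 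By the uniqueness of strong solutions in Theorem \ref{mainold}, $v(t)=T(t)(u+\varphi)$, which is precisely \eqref{additivitylemmaeq}.

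The main obstacle is verifying the constant-invariance of $\mathcal{A}$ carefully at the level of condition (6), since one must check that the admissible class of test functions is mapped onto itself under the shift $\psi\mapsto\psi+\varphi$ and that the truncation argument $\tau_k\bigl((f+\varphi)-\psi\bigr)$ matches up correctly; the reindexing in $k$ is harmless in the limit because condition (6) is quantified over all $k\in(0,\infty)$. The differentiability addendum is then immediate: if $T(\cdot)u$ is differentiable at $t$, then \eqref{additivitylemmaeq} gives $T(\cdot)(u+\varphi)=T(\cdot)u+\varphi$, and differentiating the constant away yields $T'(t)(u+\varphi)=T'(t)u$.
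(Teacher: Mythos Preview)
Your argument is correct and in fact more direct than the paper's. Both proofs proceed by showing that $t\mapsto T(t)u+\varphi$ is a strong solution of \eqref{mainoldeq} with initial datum $u+\varphi$ and then invoking the uniqueness in Theorem~\ref{mainold}; the difference lies in \emph{which} operator is used to verify this. The paper first restricts to $u\in L^{\infty}(S)$, appeals to Lemma~\ref{nicealemma}\,(iv) to get $T(t)u\in D(A)$ with $-T'(t)u=AT(t)u$, and checks that adding the constant $\varphi$ leaves condition~(iii) of Definition~\ref{defo} intact (only $\nabla T(t)u$ enters); it then passes to general $u\in L^{1}(S)$ by density and the contraction property from Lemma~\ref{nicealemma}. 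You instead establish the translation invariance $\mathcal{A}(f+\varphi)=\mathcal{A}f$ directly at the level of Definition~\ref{defo}\,(iv)--(vi), which lets you handle all $u\in L^{1}(S)$ in one stroke without the $L^{\infty}$ reduction or density argument. The price you pay is the somewhat more delicate verification of (v) and of $\widetilde{\nabla}(f+\varphi)=\widetilde{\nabla}f$; the paper avoids truncations and generalized gradients entirely by working with the simpler operator $A$. One minor remark: in your handling of condition~(vi), no ``reindexing in $k$'' is actually needed---the shift is absorbed entirely by replacing the test function $\psi$ with $\psi-\varphi$, exactly as you say; the reindexing only arises when checking (v), where $\tau_k(f+\varphi)$ is a Lipschitz function of $\tau_{k+|\varphi|}(f)$.
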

\begin{proof} Let $u \in L^{\infty}( S )$, let  $\varphi: S  \rightarrow \mathbb{R}$ be a constant function and introduce $f:[0,\infty)\rightarrow L^{1}( S )$ by $f(t):=T(t)(u)+\varphi$.\\
	It is clear that $f(0)=u+\varphi$ and also that $f$ is continuous on $[0,\infty)$ and an element of $W^{1,1}_{\text{Loc}}((0,\infty);L^{1}( S ))$, since $T(\cdot)u$ has these properties.\\
	Now observe that obviously $f^{\prime}(t)=T^{\prime}(t)u$ for a.e. $t \in (0,\infty)$. Moreover, one has for any $\varphi \in W^{1,p}_{\gamma}( S ) \cap L^{\infty}( S )$ that
	\begin{align*}
	\int \limits_{ S }  \gamma|\nabla f(t)|^{p-2}\nabla f(t)\cdot\nabla \varphi  d \lambda = \int \limits_{ S }  \gamma|\nabla T(t)u|^{p-2}\nabla T(t)u\cdot\nabla \varphi  d \lambda
	\end{align*}
	which implies, together with $f^{\prime}(t)=T^{\prime}(t)u$ for a.e. $t \in (0,\infty)$ and Lemma \ref{nicealemma}, that $f(t) \in D(A)$ and $-f^{\prime}(t) = Af(t)$ for a.e. $t \in (0,\infty)$. Consequently the claim is verified for initial values $u \in L^{\infty}( S )$.\\
	Conclusively, applying Lemma \ref{nicealemma} yields that (\ref{additivitylemmaeq}) holds also for arbitrary initial values $u \in L^{1}( S )$, since  $L^{\infty}( S )$ is dense in $(L^{1}( S ),||\cdot||_{L^{1}( S )})$.\\ 
	Finally observe that (\ref{additivitylemmaeq}) clearly implies the remaining part of the claim. 
\end{proof}

\begin{rem} In everything which follows let $p_{0} \in [1,p]$ be the constant defined by
	\begin{align*}
	p_{0}:= \inf \{q > 1: \gamma^{\frac{1}{1-q}} \in L^{1}( S )\}.
	\end{align*}
	Since $\gamma^{\frac{1}{1-p}} \in L^{1}( S )$ by assumption it is clear that indeed $p_{0} \leq p$.\\
	The following lemma reveals that even $p_{0} <p$.	
\end{rem}

\begin{lem}\label{p0lemma} If $q>p_{0}$ then $\gamma^{\frac{1}{1-q}} \in L^{1}( S )$. Moreover, $p_{0} < p$.
\end{lem}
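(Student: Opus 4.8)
The plan is to prove two assertions: first, that $q > p_0$ implies $\gamma^{\frac{1}{1-q}} \in L^1(S)$; and second, that the infimum defining $p_0$ is actually attained strictly below $p$, i.e. $p_0 < p$. I would begin by recording the key monotonicity property of the exponent map. For fixed $\gamma$ with $0 < \gamma \leq \|\gamma\|_{L^\infty(S)}$ a.e., consider the exponent $\alpha(q) := \frac{1}{1-q}$, which for $q > 1$ is negative, so $\gamma^{\alpha(q)} = \gamma^{-|\alpha(q)|}$ is a \emph{negative} power of $\gamma$. As $q$ increases from $1$ to $\infty$, $\alpha(q) = \frac{1}{1-q}$ increases from $-\infty$ toward $0$, so $|\alpha(q)|$ \emph{decreases}; hence the negative power $\gamma^{\frac{1}{1-q}}$ becomes smaller wherever $\gamma$ is small (the only place the integrability can fail, since $\gamma$ is bounded above). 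This is the structural fact driving everything.

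For the first assertion, let $q > p_0$. By the definition of $p_0$ as an infimum, there exists some $r$ with $p_0 \leq r < q$ such that $\gamma^{\frac{1}{1-r}} \in L^1(S)$. I would then compare $\gamma^{\frac{1}{1-q}}$ with $\gamma^{\frac{1}{1-r}}$. Splitting $S$ into the region $\{\gamma \geq 1\}$ and $\{\gamma < 1\}$: on $\{\gamma \geq 1\}$ the function $\gamma^{\frac{1}{1-q}} = \gamma^{-|\alpha(q)|} \leq 1$ is bounded and hence integrable over the bounded set $S$; on $\{\gamma < 1\}$ we have $\gamma^{-|\alpha(q)|} \leq \gamma^{-|\alpha(r)|} = \gamma^{\frac{1}{1-r}}$ precisely because $|\alpha(q)| < |\alpha(r)|$ and $\gamma < 1$. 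Therefore $\int_S \gamma^{\frac{1}{1-q}}\,d\lambda \leq \lambda(S) + \int_S \gamma^{\frac{1}{1-r}}\,d\lambda < \infty$, giving $\gamma^{\frac{1}{1-q}} \in L^1(S)$.

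For the strict inequality $p_0 < p$, the natural route is to exploit the hypothesis that $\gamma$ extends to an $A_p(\mathbb{R}^n)$-Muckenhoupt weight $\gamma_0$. A defining feature of the $A_p$ class is that the weight satisfies an \emph{open-ended} or self-improving integrability condition: if $\gamma_0 \in A_p(\mathbb{R}^n)$, then $\gamma_0 \in A_{p'}(\mathbb{R}^n)$ for some $p' < p$ (the well-known fact that the Muckenhoupt classes open up downward, essentially a consequence of the reverse Hölder inequality enjoyed by $\gamma_0^{\frac{1}{1-p}}$). Applied to $\gamma_0$, this yields $\gamma_0^{\frac{1}{1-p'}} \in L^1_{\mathrm{Loc}}(\mathbb{R}^n)$ for some $p' \in (1,p)$, and restricting to the bounded set $S$ gives $\gamma^{\frac{1}{1-p'}} \in L^1(S)$. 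By the definition of $p_0$ as an infimum, this forces $p_0 \leq p' < p$.

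The main obstacle is the justification of the downward self-improvement of the Muckenhoupt condition. One must either cite the standard result that $A_p = \bigcup_{p' < p} A_{p'}$ (equivalently, that $A_p$ weights satisfy a reverse Hölder inequality that opens the class downward) or give a short direct argument that $\gamma_0^{\frac{1}{1-p}} \in L^1_{\mathrm{Loc}}$ combined with the reverse Hölder inequality upgrades to $\gamma_0^{\frac{1}{1-p'}} \in L^1_{\mathrm{Loc}}$ for a nearby smaller exponent $p'$. I would prefer to invoke this as a known property of $A_p$ weights rather than reprove it, so the real content of the lemma reduces to the elementary comparison argument in the first two paragraphs, with the strict inequality resting entirely on this one cited structural theorem about Muckenhoupt classes.
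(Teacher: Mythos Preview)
Your proof is correct and matches the paper's approach almost exactly for the second assertion: both invoke the open-ended property of Muckenhoupt classes (the paper cites \cite{mw}, Ch.\ IX, Prop.\ 4.3 and Theorem 5.5 for the existence of $\varepsilon>0$ with $\gamma_0\in A_{p-\varepsilon}(\mathbb{R}^n)$) and then restrict to the bounded set $S$.

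For the first assertion there is a minor technical difference. The paper takes $\tilde q\in[p_0,q)\setminus\{1\}$ with $\gamma^{\frac{1}{1-\tilde q}}\in L^1(S)$ and then applies H\"older's inequality with exponent $\frac{1-q}{1-\tilde q}>1$ to obtain
\[
\int_S \gamma^{\frac{1}{1-q}}\,d\lambda \;\le\; \lambda(S)^{\frac{\tilde q - q}{1-q}}\Bigl(\int_S \gamma^{\frac{1}{1-\tilde q}}\,d\lambda\Bigr)^{\frac{1-\tilde q}{1-q}}<\infty.
\]
You instead split $S$ into $\{\gamma\ge 1\}$ and $\{\gamma<1\}$ and use a direct pointwise comparison. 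Both arguments are elementary and equally short; your splitting argument is perhaps more transparent about \emph{why} the integrability improves (only small values of $\gamma$ matter), while the H\"older route packages everything into one line and yields an explicit constant. Neither approach has an advantage of substance here.
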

\begin{proof} Let $q>p_{0}$, then there is $\tilde{q} \in [p_{0},q)\setminus\{1\}$ such that $\gamma^{\frac{1}{1-\tilde{q}}} \in L^{1}( S )$. Since trivially $\frac{1-q}{1-\tilde{q}}>1$, H\"older's inequality yields
	\begin{align*}
	\int \limits_{ S } \gamma^{\frac{1}{1-q}}d \lambda \leq \lambda( S )^{\frac{\tilde{q}-q}{1-q}} \left( \int \limits_{ S } \gamma^{\frac{1}{1-\tilde{q}}} d \lambda \right)^{\frac{1-\tilde{q}}{1-q}} < \infty,
	\end{align*}
	which implies $\gamma^{\frac{1}{1-q}} \in L^{1}( S )$.\\
	By assumption there is $\gamma_{0} \in A_{p}(\mathbb{R}^{n})$  such that $\gamma=\gamma_{0}$ a.e. on $ S $. Moreover, there is an $\varepsilon \in (0,p-1)$ such that $\gamma_{0} \in A_{p-\varepsilon}(\mathbb{R}^{n})$. (See \cite{mw}, Ch. IX Prop. 4.3 and Theorem 5.5.)\\
	Since $ S $ is bounded, there is a ball $B \subseteq \mathbb{R}^{n}$ containing $ S $ which implies $\gamma_{0}^{\frac{1}{1-(p-\varepsilon)}} \in L^{1}( S )$. This implies $p_{0}<p$, since $\gamma=\gamma_{0}$ a.e. on $ S $.
\end{proof}

\begin{lem}\label{technicallemma1} Let $0 \leq \delta < \frac{p-p_{0}}{p_{0}}$ and $f \in W^{1,p}_{\gamma}( S )$ then $f \in W^{1,1+\delta}( S )$ and 
	\begin{align}
	\label{smartdduestimate}
	||\nabla f||_{ L^{1+\delta}( S ;\mathbb{R}^{n})} \leq \left( \int \limits_{ S } \gamma^{\frac{1+\delta}{1+\delta-p}} d \lambda \right)^{\frac{p-1-\delta}{p(1+\delta)}} ||\nabla f||_{L^{p}( S ,\nu;\mathbb{R}^{n})}<\infty.
	\end{align}
\end{lem}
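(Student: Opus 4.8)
The plan is to bound the unweighted $L^{1+\delta}$-norm of $\nabla f$ by the weighted $L^{p}$-norm through a single, carefully weighted application of H\"older's inequality, and then to certify that the resulting weight integral is finite by appealing to Lemma \ref{p0lemma}. First I would record the elementary observation that the hypothesis $\delta < \frac{p-p_{0}}{p_{0}}$ together with $p_{0}\geq 1$ forces $\delta < p-1$, so that $1+\delta < p$; this guarantees that the conjugate exponents
\begin{align*}
r:=\frac{p}{1+\delta}, \qquad r':=\frac{p}{p-1-\delta}
\end{align*}
are both well-defined and positive, and that $\tfrac1r+\tfrac1{r'}=1$.

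The central step is the factorization $|\nabla f|^{1+\delta} = \bigl(|\nabla f|^{1+\delta}\gamma^{\frac{1+\delta}{p}}\bigr)\cdot \gamma^{-\frac{1+\delta}{p}}$, followed by H\"older's inequality with the exponents $r,r'$ above. The first factor, raised to the power $r$ and then to $\tfrac1r=\tfrac{1+\delta}{p}$, collapses to $\bigl(\int_{S}|\nabla f|^{p}\gamma\,d\lambda\bigr)^{\frac{1+\delta}{p}}=\|\nabla f\|_{L^{p}(S,\nu;\mathbb{R}^{n})}^{1+\delta}$, since $(1+\delta)r=p$ and $\tfrac{1+\delta}{p}r=1$. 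In the second factor the exponent on $\gamma$ becomes $-\tfrac{1+\delta}{p}\cdot r' = \tfrac{1+\delta}{1+\delta-p}$, with outer power $\tfrac1{r'}=\tfrac{p-1-\delta}{p}$. Combining the two factors and taking the $(1+\delta)$-th root yields precisely (\ref{smartdduestimate}); this part is pure bookkeeping of exponents and I would not belabor it.

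The one genuine point to verify is the finiteness of $\int_{S}\gamma^{\frac{1+\delta}{1+\delta-p}}\,d\lambda$, and here the hypothesis on $\delta$ is used in an essential way. Writing the exponent as $\frac{1}{1-q}$ and solving gives $q=\frac{p}{1+\delta}$, and the strict inequality $\delta<\frac{p-p_{0}}{p_{0}}$ is \emph{exactly} the condition $q>p_{0}$. Hence Lemma \ref{p0lemma} applies and delivers $\gamma^{\frac{1}{1-q}}=\gamma^{\frac{1+\delta}{1+\delta-p}}\in L^{1}(S)$, so the right-hand side of (\ref{smartdduestimate}) is finite. Finally, to conclude $f\in W^{1,1+\delta}(S)$ it remains to note that $f\in L^{p}(S)$ and that $S$ is bounded with $1+\delta<p$, whence $L^{p}(S)\hookrightarrow L^{1+\delta}(S)$ gives $f\in L^{1+\delta}(S)$, while the estimate just proved gives $\nabla f\in L^{1+\delta}(S;\mathbb{R}^{n})$. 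I do not anticipate a real obstacle beyond keeping the exponent arithmetic straight and recognizing that the admissible range for $\delta$ is tuned precisely so that $\frac{p}{1+\delta}$ lands above the critical threshold $p_{0}$.
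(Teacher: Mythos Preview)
Your proposal is correct and follows essentially the same route as the paper: the same factorization $|\nabla f|^{1+\delta}=\bigl(|\nabla f|^{1+\delta}\gamma^{(1+\delta)/p}\bigr)\gamma^{-(1+\delta)/p}$, the same H\"older pairing with exponents $p/(1+\delta)$ and $p/(p-1-\delta)$, and the same appeal to Lemma~\ref{p0lemma} via $q=\frac{p}{1+\delta}>p_{0}$ to certify finiteness of the weight integral. The closing remark that $f\in L^{1+\delta}(S)$ follows from $f\in L^{p}(S)$, $\lambda(S)<\infty$, and $1+\delta<p$ also mirrors the paper exactly.
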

\begin{proof} Let $0 \leq \delta < \frac{p-p_{0}}{p_{0}}$. (Note that $p_{0} < p$, consequently such a $\delta$ does indeed exists.)\\
	Let $f \in W^{1,p}_{\gamma}( S )$, then obviously $f \in W^{1,1}_{\text{Loc}}( S )$ as well as $f \in L^{p}( S )$.\\  
	Moreover, note that $1+\delta < 1+\frac{p-p_{0}}{p_{0}} \leq p$ and consequently $f \in L^{1+\delta}( S )$, since $\lambda( S )<\infty$.\\
	Conclusively the claim follows once (\ref{smartdduestimate}) is proven.\\
	First of all $1+\delta - p \neq 0$.\\
	Secondly observe that $\frac{p}{1+\delta}>p_{0}$, consequently Lemma \ref{p0lemma} yields
	\begin{align}
	\label{smartdduestimate1}
	\int \limits_{ S } \gamma^{\frac{1+\delta}{1+\delta-p}} d \lambda = \int \limits_{ S } \gamma^{\frac{1}{1-\frac{p}{1+\delta}}} d \lambda <\infty.
	\end{align}
	Finally, (\ref{smartdduestimate}) follows from the following estimate, where H\"older's inequality is used.
	\begin{align*}
		||\nabla f||_{L^{1+\delta}( S ;\mathbb{R}^{n})}
		& =  \left( \int \limits_{ S }  |\nabla f|^{1+\delta} \gamma^{\frac{1+\delta}{p}}\gamma^{-\frac{1+\delta}{p}}	d\lambda \right)^{\frac{1}{1+\delta}}\\
		& \leq  \left( \left( \int \limits_{ S }  |\nabla f|^{p} \gamma d\lambda \right)^{\frac{1+\delta}{p}} \left( \int \limits_{ S } \gamma^{\frac{1+\delta}{1+\delta-p}}	d\lambda \right)^{\frac{p-1-\delta}{p}}\right)^{\frac{1}{1+\delta}}\\
		& =   ||\nabla f||_{L^{p}( S ,\nu;\mathbb{R}^{n})} \left( \int \limits_{ S } \gamma^{\frac{1+\delta}{1+\delta-p}}	d\lambda \right)^{\frac{p-1-\delta}{p(1+\delta)}},
	\end{align*}
which is finite due to (\ref{smartdduestimate1}).
\end{proof}

The preceding lemma is a slight modification of \cite{sobweightedcap}, Prop. 2.1. There, an analogues result is proven for Sobolev spaces, where the function and its weak derivative need to be integrable with respect to the same measure and not to different ones as in our setting.

\begin{lem}\label{techniacallemma2} Let $u \in L^{2}( S ) \cap L^{p}( S )$, then $T(t)u \in L^{2}( S ) \cap W^{1,p}_{\gamma}( S )$ for a.e. $t \in (0,\infty)$ and moreover
	\begin{align}
	\label{upperbounderivative}
	||\nabla T(t)u||_{L^{p}( S ,\nu;\mathbb{R}^{n})} \leq \left( \frac{2}{|p-2|} \right)^{\frac{1}{p}} ||u-\overline{(u)}_{ S }||^{\frac{2}{p}}_{L^{2}( S )} \left(\frac{1}{t}\right)^{\frac{1}{p}} 
	\end{align}
	for a.e. $t \in (0,\infty)$.
\end{lem}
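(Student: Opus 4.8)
The plan is to prove (\ref{upperbounderivative}) first for bounded initial values and then to pass to the general case by approximation. Write $w_{0}:=u-\overline{(u)}_{S}$ and $w(t):=T(t)u-\overline{(u)}_{S}$; since $-\overline{(u)}_{S}$ is a constant function, Lemma \ref{additionlemma} gives $w(t)=T(t)w_{0}$ and $w'(t)=T'(t)u$. Assume first $u\in L^{\infty}(S)$. By Lemma \ref{nicealemma}(4) one has $T(t)u\in D(A)\subseteq W^{1,p}_{\gamma}(S)\cap L^{\infty}(S)$ and $-T'(t)u=AT(t)u$ for a.e. $t\in(0,\infty)$, which in particular yields the asserted membership $T(t)u\in L^{2}(S)\cap W^{1,p}_{\gamma}(S)$. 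Testing the defining identity of $A$ (Definition \ref{defo}(3)) against $\varphi=w(t)$, which is admissible and satisfies $\nabla\varphi=\nabla T(t)u$, I obtain the energy identity
\[ \|\nabla T(t)u\|_{L^{p}(S,\nu;\mathbb{R}^{n})}^{p}=-\int_{S}T'(t)u\cdot w(t)\,d\lambda\le\|T'(t)u\|_{L^{2}(S)}\,\|w(t)\|_{L^{2}(S)} \]
for a.e. $t$, the final step being the Cauchy--Schwarz inequality.

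It remains to estimate the two factors. Since $0\in\mathcal{A}0$ forces $T(t)0=0$, complete accretivity gives $w(t)=T(t)w_{0}<<w_{0}$, whence $\|w(t)\|_{L^{2}(S)}\le\|w_{0}\|_{L^{2}(S)}$ (apply the definition of $<<$ with $j(x)=|x|^{2}$). For the derivative term I would exploit homogeneity: Definition \ref{defo}(3) shows at once that $A(\lambda f)=\lambda^{p-1}Af$ for every $\lambda>0$, so its closure $\mathcal{A}$ is positively homogeneous of degree $p-1$. Consequently $z(t):=\lambda\,T(\lambda^{p-2}t)w_{0}$ solves $0\in z'+\mathcal{A}z$ with $z(0)=\lambda w_{0}$, and uniqueness (Theorem \ref{mainold}) yields the scaling identity $T(t)(\lambda w_{0})=\lambda\,T(\lambda^{p-2}t)w_{0}$ for all $\lambda>0$. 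Feeding this into the $L^{2}$-contraction estimate for the pair $w_{0},\lambda w_{0}$ gives $\|\lambda\,T(\lambda^{p-2}t)w_{0}-T(t)w_{0}\|_{L^{2}(S)}\le|\lambda-1|\,\|w_{0}\|_{L^{2}(S)}$; dividing by $\lambda-1$ and letting $\lambda\to1$ (the quotient converges in $L^{1}(S)$ to $w(t)+(p-2)t\,T'(t)u$ at a.e. $t$, its $L^{2}$-norm remaining below $\|w_{0}\|_{L^{2}(S)}$) yields, by lower semicontinuity of the $L^{2}$-norm and the triangle inequality, the B\'enilan--Crandall type bound $\|T'(t)u\|_{L^{2}(S)}\le\frac{2}{|p-2|\,t}\|w_{0}\|_{L^{2}(S)}$ (cf. \cite{BenilanBook}). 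Substituting both factor bounds into the energy identity and taking $p$-th roots establishes (\ref{upperbounderivative}) for $u\in L^{\infty}(S)$.

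For general $u\in L^{2}(S)\cap L^{p}(S)$ I would approximate by the truncations $u_{m}:=\tau_{m}(u)\in L^{\infty}(S)$, which converge to $u$ both in $L^{2}(S)$ and in $L^{p}(S)$. By $L^{2}$-contractivity $T(t)u_{m}\to T(t)u$ in $L^{2}(S)$, so for a.e. $t$ (discarding the countable union of the exceptional null sets) the inequality just proven bounds $\|\nabla T(t)u_{m}\|_{L^{p}(S,\nu;\mathbb{R}^{n})}$ uniformly in $m$; since $1<p<\infty$ the space $L^{p}(S,\nu;\mathbb{R}^{n})$ is reflexive and a subsequence converges weakly. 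The assumption $\gamma^{\frac{1}{1-p}}\in L^{1}(S)$ guarantees that $\gamma^{-1}\psi\in L^{\frac{p}{p-1}}(S,\nu;\mathbb{R}^{n})$ for every $\psi\in C_{c}^{\infty}(S;\mathbb{R}^{n})$, which together with $T(t)u_{m}\to T(t)u$ in $L^{1}(S)$ identifies the weak limit as $\nabla T(t)u$. Hence $T(t)u\in W^{1,p}_{\gamma}(S)$, and weak lower semicontinuity of the norm transfers (\ref{upperbounderivative}) from $u_{m}$ to $u$.

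The step I expect to be the main obstacle is the derivative estimate for $\|T'(t)u\|_{L^{2}(S)}$. Theorem \ref{mainold} furnishes differentiability of $T(\cdot)w_{0}$ only in $L^{1}(S)$, so the passage $\lambda\to1$ in the difference quotients must be performed in $L^{1}(S)$ while keeping the uniform $L^{2}$-bound alive; it is precisely this lower-semicontinuity argument that certifies both that $T'(t)u$ actually lies in $L^{2}(S)$ and that it obeys the sharp constant $\tfrac{2}{|p-2|}$. A secondary technical nuisance is the bookkeeping of the $t$-dependent exceptional null sets in the final approximation.
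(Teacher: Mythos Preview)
Your proof is correct and reaches the same conclusion, but the route differs from the paper's in two respects.

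First, the paper works directly with $u\in L^{2}(S)\cap L^{p}(S)$ without a preliminary reduction to bounded data. Rather than testing the identity for $A$, it exploits the entropy formulation of $\mathcal{A}$ (Definition~\ref{defo}(6)): the truncations $\tau_{k}(T(t)u-\overline{(u)}_{S})$ are admissible test functions even when $T(t)u$ is not known to be bounded, and Fatou's lemma passes the truncated energy inequality to the generalized gradient $\tilde{\nabla}T(t)u$. Once the weighted $L^{p}$-bound on $\tilde{\nabla}T(t)u$ is in hand, Remark~\ref{generaliedwdlemma} upgrades it to the ordinary weak gradient. This bypasses your second step entirely (the truncations $u_{m}$, weak $L^{p}(S,\nu)$-compactness, and identification of the weak limit via $\gamma^{-1}\psi\in L^{p'}(S,\nu)$), although that step is sound as you wrote it.

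Second, for the regularization bound $\|T'(t)(u-\overline{(u)}_{S})\|_{L^{2}(S)}\le\frac{2}{|p-2|\,t}\|u-\overline{(u)}_{S}\|_{L^{2}(S)}$ the paper simply invokes \cite{cao}, Theorem~4.4, whereas you re-derive it from the scaling identity $T(t)(\lambda w_{0})=\lambda\,T(\lambda^{p-2}t)w_{0}$ together with $L^{2}$-contractivity. Your derivation is correct, and the step you flag as the main obstacle---promoting the uniform $L^{2}$-bound on the difference quotients to an $L^{2}$-bound on the $L^{1}$-limit---is handled exactly as you describe (Fatou along an a.e.\ convergent subsequence, or weak $L^{2}$-compactness).

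In short: the paper's argument is shorter because it leans on the entropy formulation and a cited regularization theorem; yours is longer but more self-contained, trading the truncated-test-function trick for a clean weak-compactness closure.
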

\begin{proof} Let $t \in (0,\infty)$ be such that  $0 \in T^{\prime}(t)u+\mathcal{A}T(t)u$. Theorem \ref{mainold} implies that almost every value in $(0,\infty)$ is a valid choice for $t$.\\
	Let $u \in L^{2}( S )\cap L^{p}( S )$ then Lemma \ref{nicealemma} yields $T(t)u \in L^{2}( S )\cap L^{p}( S )$.\\ 
	Note that $T(t)u$ is generalized weakly differentiable. Consequently, if one proves that 
	\begin{align}
	\label{upperbounderivativeproof1}
	\int \limits_{ S }\gamma|\tilde{\nabla }T(t)u|^{p}d\lambda \leq  \frac{2}{|p-2|}  ||u-\overline{(u)}_{ S }||^{2}_{L^{2}( S )} \frac{1}{t},
	\end{align}
	then obviously $\tilde{\nabla }T(t)u \in L^{p}( S ,\nu;\mathbb{R}^{n}) \subseteq L^{1}( S ;\mathbb{R}^{n})$ and therefore, by virtue of Remark \ref{generaliedwdlemma}, $\tilde{\nabla }T(t)u=\nabla T(t)u$ a.e. on $ S $.\\
	Hence, if (\ref{upperbounderivativeproof1}) holds, then also (\ref{upperbounderivative}) as well as $T(t)u \in L^{2}( S ) \cap W^{1,p}_{\gamma}( S )$.\\
	Proof of (\ref{upperbounderivativeproof1}). First of all observe that
	\begin{align*}
	\mathcal{A}(\alpha v)=\alpha^{p-1}\mathcal{A} v,~\forall v \in D(\mathcal{A}),~\alpha \in (0,\infty).
	\end{align*}
	Consequently Theorem \ref{mainold} together with \cite{cao}, Theorem 4.4 yield
	\begin{align}
	\label{upperbounderivativeproof2}
	||T^{\prime}(t)(u-\overline{(u)}_{ S })||_{L^{2}( S )} \leq \frac{2}{|p-2|t} ||u-\overline{(u)}_{ S }||_{L^{2}( S )}.
	\end{align}
	Moreover, one infers from Fatou's lemma and Lemma \ref{additionlemma} that
	\begin{align*}
	\int \limits_{ S }\gamma|\tilde{\nabla }T(t)u|^{p}d\lambda \leq \liminf \limits_{k \rightarrow \infty}\int \limits_{ S } -T^{\prime}(t)\left(u-\overline{(u)}_{ S }\right) \tau_{k}\left(T(t)u-\overline{(u)}_{ S }\right) d\lambda
	\end{align*}
	Consequently Cauchy Schwarz inequality, (\ref{upperbounderivativeproof2}) and Lebesgue's theorem yield
	\begin{align*}
	\int \limits_{ S }\gamma|\tilde{\nabla }T(t)u|^{p}d\lambda \leq  \frac{2}{|p-2|t} ||u-\overline{(u)}_{ S }||_{L^{2}( S )} ||T(t)u-\overline{(u)}_{ S }||_{L^{2}( S )}
	\end{align*}
	Finally, (\ref{upperbounderivativeproof1}) follows by applying (\ref{additivitylemmaeq}) and  Lemma \ref{nicealemma}.
\end{proof}

Here, and in everything that follows $C_{ S ,q}$ denotes the Poincar\'{e} constant of $ S $, for any $q \in [1,\infty)$, i.e.
$C_{ S ,q} \in (0,\infty)$ is the smallest constant depending only on $ S $ and $q$, such that
\begin{align*}
||f-\overline{(f)}_{ S }||_{L^{q}( S )} \leq C_{ S ,q}||\nabla f||_{L^{q}( S ;\mathbb{R}^{n})},~\forall f \in W^{1,q}( S ).
\end{align*}
Note that $ S $ is assumed to be open, bounded, connected and of class $C^{1}$. Consequently  the Poincar\'{e} inequality implies the existence of $C_{ S ,q}$.

\begin{thm}\label{mainwithoutasumptions} Let $0 \leq \delta < \frac{p-p_{0}}{p_{0}}$ and $u \in L^{2}( S )\cap L^{1+\delta}( S )$, then 
	\begin{align}
	\label{ubtheoremeq1}
	||T(t)u-\overline{(u)}_{ S }||_{L^{1+\delta}( S )} \leq C_{ S ,1+\delta}\Gamma_{\delta,p}  ||u-\overline{(u)}_{ S }||^{\frac{2}{p}}_{L^{2}( S )} \left(\frac{1}{t}\right)^{\frac{1}{p}}
	\end{align}
	for every $t \in (0,\infty)$, where
	\begin{align}
	\label{ubtheoremeq3}
		\Gamma_{\delta,p}:=\left( \int \limits_{ S } \gamma^{\frac{1+\delta}{1+\delta-p}} d \lambda \right)^{\frac{p-1-\delta}{p(1+\delta)}} \left( \frac{2}{|p-2|} \right)^{\frac{1}{p}}<\infty.
	\end{align}
\end{thm}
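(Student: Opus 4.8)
The plan is to assemble the conclusion from the three estimates already in hand: the Poincar\'e inequality, the embedding estimate of Lemma \ref{technicallemma1}, and the gradient decay of Lemma \ref{techniacallemma2}. The constant $\Gamma_{\delta,p}$ in (\ref{ubtheoremeq3}) is deliberately the product of the constant appearing in (\ref{smartdduestimate}) and the factor $(2/|p-2|)^{1/p}$ from (\ref{upperbounderivative}); its finiteness is exactly (\ref{smartdduestimate1}), since the hypothesis $\delta < (p-p_{0})/p_{0}$ is equivalent to $p/(1+\delta) > p_{0}$, whence $\gamma^{(1+\delta)/(1+\delta-p)} = \gamma^{1/(1-p/(1+\delta))} \in L^{1}( S )$ by Lemma \ref{p0lemma}.

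First I would establish (\ref{ubtheoremeq1}) for the more restrictive class $u \in L^{2}( S ) \cap L^{p}( S )$ and for a.e.\ $t \in (0,\infty)$. Fix such a $u$ and a $t$ for which Lemma \ref{techniacallemma2} applies, so that $T(t)u \in W^{1,p}_{\gamma}( S )$. Then Lemma \ref{technicallemma1} yields $T(t)u \in W^{1,1+\delta}( S )$, making the Poincar\'e inequality applicable; combined with the conservation of mass (Lemma \ref{comlemma1}), which gives $\overline{(T(t)u)}_{ S } = \overline{(u)}_{ S }$, one obtains
\begin{align*}
||T(t)u - \overline{(u)}_{ S }||_{L^{1+\delta}( S )} = ||T(t)u - \overline{(T(t)u)}_{ S }||_{L^{1+\delta}( S )} \leq C_{ S ,1+\delta} ||\nabla T(t)u||_{L^{1+\delta}( S ;\mathbb{R}^{n})}.
\end{align*}
Chaining first (\ref{smartdduestimate}) and then (\ref{upperbounderivative}) onto the right-hand side produces (\ref{ubtheoremeq1}) for a.e.\ $t$.

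Two upgrades then remain. To pass from a.e.\ $t$ to every $t$, I would note that the right-hand side of (\ref{ubtheoremeq1}) is continuous in $t$ on $(0,\infty)$, so it suffices to prove that $t \mapsto ||T(t)u - \overline{(u)}_{ S }||_{L^{1+\delta}( S )}$ is continuous there. This follows by interpolating the $L^{1+\delta}$-norm between $L^{1}$ and $L^{p}$: the map $T(\cdot)u$ is continuous into $L^{1}( S )$ by Theorem \ref{mainold}, while $T(t)u << u$ (Lemma \ref{nicealemma}) furnishes a uniform bound in $L^{p}( S )$; since $1+\delta < p$, the interpolation inequality forces continuity in $L^{1+\delta}( S )$, and an inequality between continuous functions that holds on the dense complement of a null set holds everywhere. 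Finally, to weaken the hypothesis to $u \in L^{2}( S ) \cap L^{1+\delta}( S )$, I would approximate by the truncations $u_{k} := \tau_{k}(u) \in L^{\infty}( S ) \subseteq L^{2}( S ) \cap L^{p}( S )$, which converge to $u$ in $L^{2}( S )$ and in $L^{1+\delta}( S )$ by dominated convergence; the right-hand side of (\ref{ubtheoremeq1}) then passes to the limit, and for the left-hand side the complete-accretivity relation $T(t)u_{k} - T(t)u << u_{k} - u$ (Lemma \ref{nicealemma}) gives $||T(t)u_{k} - T(t)u||_{L^{1+\delta}( S )} \leq ||u_{k} - u||_{L^{1+\delta}( S )} \to 0$, so the estimate survives the limit for each fixed $t$.

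The genuinely delicate point is the upgrade from almost every $t$ to every $t$: Lemma \ref{techniacallemma2} controls the weighted gradient only off a $u$-dependent null set, so the core chain of inequalities is a priori valid only a.e., and the full assertion rests on the time-continuity of $T(\cdot)u$ in $L^{1+\delta}( S )$, which in turn depends on the uniform higher-integrability bound available precisely because $1+\delta < p$. The remaining manipulations — combining the constants, invoking conservation of mass, and passing to the truncation limit — are routine, the last two resting entirely on the $<<$-contraction properties collected in Lemma \ref{nicealemma}.
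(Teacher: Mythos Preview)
Your proof is correct and follows the same three-step architecture as the paper: establish (\ref{ubtheoremeq1}) for $u \in L^{2}\cap L^{p}$ and a.e.\ $t$ via Poincar\'e, Lemma \ref{technicallemma1}, and Lemma \ref{techniacallemma2}; then upgrade to all $t$; then relax to $u \in L^{2}\cap L^{1+\delta}$ by approximation. The only differences are tactical. For the passage to every $t$, the paper takes a sequence $t_{m}\to t$ along which the bound holds, extracts an a.e.-convergent subsequence from $L^{1}$-continuity, and invokes Fatou's lemma on $|T(t_{m})u-\overline{(u)}_{S}|^{1+\delta}$; your interpolation argument (continuity in $L^{1}$ plus a uniform $L^{p}$ bound forcing continuity in $L^{1+\delta}$) is a clean alternative that avoids the subsequence manipulation. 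For the density step, the paper appeals to a generic approximating sequence in $L^{2}\cap L^{p}$, whereas you make the explicit choice $u_{k}=\tau_{k}(u)$; both pass to the limit via the same $<<$-contraction from Lemma \ref{nicealemma}.
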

\begin{proof} Let $0 \leq \delta < \frac{p-p_{0}}{p_{0}}$ and $u \in L^{2}( S )\cap L^{p}( S )$.\\
	Let $t \in (0,\infty)$ be such that the assertions of Lemma \ref{techniacallemma2} hold. Since \linebreak$T(t)u \in W^{1,p}_{\gamma}( S )$ Lemma \ref{technicallemma1} yields $T(t)u \in W^{1,1+\delta}( S )$ and consequently Lemma \ref{comlemma1}, Poincar\'{e}'s inequality, (\ref{smartdduestimate}) and (\ref{upperbounderivative}) imply
	\begin{align*}
		||T(t)u-\overline{(u)}_{ S }||_{L^{1+\delta}( S )}
		& =  ||T(t)u-\overline{(T(t)u)}_{ S }||_{L^{1+\delta}( S )} \\
		& \leq  C_{ S ,1+\delta}||\nabla T(t)u||_{L^{1+\delta}( S ;\mathbb{R}^{n})} \\
		& \leq  C_{ S ,1+\delta}\Gamma_{\delta,p} ||u-\overline{(u)}_{ S }||^{\frac{2}{p}}_{L^{2}( S )} \left(\frac{1}{t}\right)^{\frac{1}{p}}, 
	\end{align*}
	i.e. (\ref{ubtheoremeq1}) holds for $u \in L^{2}( S )\cap L^{p}( S )$ and almost every $t \in (0,\infty)$.\\
	
	Now let $t \in (0,\infty)$ be arbitrary and still assume  $u \in L^{2}( S )\cap L^{p}( S )$.\\
	Moreover, let $(t_{m})_{m \in  \mathbb{N}} \subseteq (0,\infty)$ be such that $\lim \limits_{m \rightarrow \infty} t_{m}=t$ and assume that (\ref{ubtheoremeq1}) holds for each $t_{m}$.\\
	Since $T(\cdot)u:[0,\infty)\rightarrow (L^{1}( S ),||\cdot||_{L^{1}( S )})$ is continuous one obtains (by passing to a subsequence if necessary) that $\lim \limits_{m \rightarrow \infty} T(t_{m})u-\overline{(u)}_{ S }=T(t)u-\overline{(u)}_{ S }$ a.e. on $ S $. Consequently one infers by virtue of Fatou's Lemma that 
	\begin{align*}
	||T(t)u-\overline{(u)}_{ S }||_{L^{1+\delta}( S )} 
	& =  \left( \int \limits_{ S } \lim \limits_{m \rightarrow \infty} |T(t_{m})u -\overline{(u)}_{ S }|^{1+\delta}d \lambda \right)^{\frac{1}{1+\delta}} \\
	& \leq  \liminf \limits_{m \rightarrow \infty} ||T(t_{m})u-\overline{(u)}_{ S }||_{L^{1+\delta}( S )}
	\end{align*}
	which implies (\ref{ubtheoremeq1}) for every $t \in (0,\infty)$ and $u \in L^{2}( S )\cap L^{p}( S )$.\\
	
	Finally, let $t \in (0,\infty)$ be arbitrary and let  $u \in L^{2}( S )\cap L^{1+\delta}( S )$.\\
	Moreover, let $(u_{m})_{m \in \mathbb{N}} \subseteq L^{2}( S )\cap L^{p}( S )$ be such that $\lim \limits_{m \rightarrow \infty}  u_{m}=u$ in $L^{2}( S )$ and in $L^{1+\delta}( S )$. Then it is plain that $\lim \limits_{m \rightarrow \infty} \overline{(u_{m})}_{ S }=\overline{(u)}_{ S }$. Moreover, it follows from Lemma \ref{nicealemma} that $\lim \limits_{m \rightarrow \infty} T(t)u_{m}=T(t)u$  in $L^{1+\delta}( S )$.\\
	Hence
	\begin{align*}
		||T(t)u-\overline{(u)}_{ S }||_{L^{1+\delta}( S )}
		& =  \lim \limits_{m \rightarrow \infty} ||T(t)u_{m}-\overline{(u_{m})}_{ S }||_{L^{1+\delta}( S )} \\
		& \leq \lim \limits_{m \rightarrow \infty}  C_{ S ,1+\delta} \Gamma_{\delta,p} ||u_{m}-\overline{(u_{m})}_{ S }||^{\frac{2}{p}}_{L^{2}( S )} \left(\frac{1}{t}\right)^{\frac{1}{p}}\\
		& =  C_{ S ,1+\delta} \Gamma_{\delta,p}||u-\overline{(u)}_{ S }||^{\frac{2}{p}}_{L^{2}( S )} \left(\frac{1}{t}\right)^{\frac{1}{p}},
	\end{align*}
	which implies (\ref{ubtheoremeq1}) for every $t \in (0,\infty)$ and $u \in L^{2}( S )\cap L^{1+\delta}( S )$. 
\end{proof}

\begin{rem} Whenever $\delta$ is given such that $0 \leq \delta < \frac{p-p_{0}}{p_{0}}$, then $\Gamma_{\delta,p}$ denotes the quantity introduced in \eqref{ubtheoremeq3}.
\end{rem}

\begin{cor}\label{corollarylim} Let $u \in L^{2}( S )$, then
	\begin{align}
	\label{ubtheoremeq2}
	||T(t)u-\overline{(u)}_{ S }||_{L^{1}( S )} \leq C_{ S ,1} \Gamma_{0,p} ||u-\overline{(u)}_{ S }||^{\frac{2}{p}}_{L^{2}( S )} \left(\frac{1}{t}\right)^{\frac{1}{p}}
	\end{align}
	for every $t \in (0,\infty)$.
\end{cor}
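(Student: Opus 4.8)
The plan is to obtain this as the special case $\delta = 0$ of Theorem \ref{mainwithoutasumptions}, so the entire argument reduces to checking that $\delta = 0$ is an admissible choice and that the hypothesis $u \in L^{2}(S) \cap L^{1+\delta}(S)$ is automatically met when $u \in L^{2}(S)$.

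First I would verify admissibility of $\delta = 0$. The theorem requires $0 \leq \delta < \frac{p-p_{0}}{p_{0}}$, and Lemma \ref{p0lemma} guarantees $p_{0} < p$, whence $\frac{p-p_{0}}{p_{0}} > 0$. Thus $\delta = 0$ lies in the permitted range. With this choice one has $1 + \delta = 1$, so $L^{1+\delta}(S) = L^{1}(S)$, $C_{S,1+\delta} = C_{S,1}$, and the constant $\Gamma_{\delta,p}$ specializes to $\Gamma_{0,p}$ as defined in \eqref{ubtheoremeq3}.

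Next I would dispose of the integrability hypothesis. Since $S$ is bounded, $\lambda(S) < \infty$, so H\"older's inequality yields the continuous inclusion $L^{2}(S) \subseteq L^{1}(S)$. Hence any $u \in L^{2}(S)$ already satisfies $u \in L^{2}(S) \cap L^{1}(S) = L^{2}(S) \cap L^{1+\delta}(S)$ with $\delta = 0$, so the full hypothesis of Theorem \ref{mainwithoutasumptions} is in force.

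Finally, invoking Theorem \ref{mainwithoutasumptions} with $\delta = 0$ gives, for every $t \in (0,\infty)$,
\begin{align*}
\|T(t)u - \overline{(u)}_{S}\|_{L^{1}(S)} \leq C_{S,1}\, \Gamma_{0,p}\, \|u - \overline{(u)}_{S}\|^{\frac{2}{p}}_{L^{2}(S)} \left(\frac{1}{t}\right)^{\frac{1}{p}},
\end{align*}
which is precisely \eqref{ubtheoremeq2}. There is no genuine obstacle here; the only points requiring a word of justification are the strict inequality $p_{0} < p$ (already supplied by Lemma \ref{p0lemma}) and the embedding of $L^{2}$ into $L^{1}$ on the bounded set $S$, both of which are immediate.
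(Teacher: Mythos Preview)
Your proposal is correct and matches the paper's intent exactly: the corollary is stated immediately after Theorem~\ref{mainwithoutasumptions} with no proof, precisely because it is the specialization $\delta=0$ (admissible by Lemma~\ref{p0lemma}) together with the trivial inclusion $L^{2}(S)\subseteq L^{1}(S)$ on the bounded set $S$.
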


The proofs of Lemma \ref{techniacallemma2} and Theorem \ref{mainwithoutasumptions} reveal that one could have stated (with slightly less effort) that
\begin{align}
\label{upperboundrema}
||T(t)u-\overline{(u)}_{ S }||_{L^{1+\delta}( S )} \leq C_{ S ,1+\delta} \Gamma_{\delta,p}||u||^{\frac{2}{p}}_{L^{2}( S )} \left(\frac{1}{t}\right)^{\frac{1}{p}}
\end{align}
for all $u \in L^{2}( S ) \cap L^{1+\delta}( S )$, $0 \leq \delta < \frac{p-p_{0}}{p_{0}}$  and $t \in (0,\infty)$. Note that (\ref{ubtheoremeq1}) is a sharper bound than (\ref{upperboundrema}) since it is well known that
\begin{align*}
||u-\overline{(u)}_{ S }||_{L^{2}( S )} \leq ||u-c||_{L^{2}( S )},~\forall c \in \mathbb{R}.
\end{align*}

If $0 \leq \delta < \frac{p-p_{0}}{p_{0}}$, then $\delta$ can be chosen as bigger as smaller $p_{0}$ gets, i.e. Theorem \ref{mainwithoutasumptions} yields the most general result if $p_{0}=1$. A sufficient condition for this to hold is that there is an $\varepsilon >0$ such that
\begin{align*}
\gamma \geq \varepsilon \text{ a.e. on }  S 
\end{align*}
Particularly, if $\gamma$ is constantly nonzero almost everywhere, then $p_{0}=1$.\\

By virtue of the Sobolev embedding theorem one obtains the main result of this section. 

\begin{thm}\label{mainuniform} Let $u \in L^{p}( S )$ and assume $p_{0}< \frac{p}{n}$, then $T(t)u \in L^{\infty}( S )$ for every $t \in (0,\infty)$. Moreover, if $n-1<\delta<\frac{p-p_{0}}{p_{0}}$, then  $T(t)u \in W^{1,1+\delta}( S )$ and there is a constant $C^{\ast}_{ S ,\delta} \in [0,\infty)$, depending only on $ S $ and $\delta$, such that
	\begin{align}
	\label{mainuniformeq}
	||T(t)u-\overline{(u)}_{ S }||_{L^{\infty}( S )} \leq C^{\ast}_{ S ,\delta} \Gamma_{\delta,p}||u-\overline{(u)}_{ S }||^{\frac{2}{p}}_{L^{2}( S )} \left(\frac{1}{t}\right)^{\frac{1}{p}},
	\end{align}
	for every $t \in (0,\infty)$.\\
	In addition,  $C^{\ast}_{ S ,\delta}$ can be chosen as $C^{\ast}_{ S ,\delta}=\tilde{C}_{ S ,1+\delta}\left( C_{ S ,1+\delta}^{1+\delta}+1\right)^{\frac{1}{1+\delta}}$, where $\tilde{C}_{ S ,1+\delta}$ is the operator norm of the continuous injection $W^{1,1+\delta}( S )\hookrightarrow L^{\infty}( S )$.
\end{thm}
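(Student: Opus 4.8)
The plan is to obtain the $L^{\infty}$-bound by pushing the regularity of $T(t)u$ already produced in Section \ref{expbounds} through the chain $W^{1,p}_{\gamma}(S)\hookrightarrow W^{1,1+\delta}(S)\hookrightarrow L^{\infty}(S)$, the last (Sobolev) embedding being available precisely because $1+\delta$ may be chosen strictly above $n$. First I would verify that the hypotheses are consistent. A one-line computation shows that $n-1<\frac{p-p_{0}}{p_{0}}$ is equivalent to $p_{0}<\frac{p}{n}$, so the assumption guarantees that the interval $\left(n-1,\frac{p-p_{0}}{p_{0}}\right)$ is non-empty, and every $\delta$ in it satisfies both $1+\delta>n$ and $0\leq\delta<\frac{p-p_{0}}{p_{0}}$. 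I would also record that, since $p_{0}\geq 1$ and $n\geq 2$, the same assumption forces $p>np_{0}\geq 2$; as $S$ is bounded this gives $L^{p}(S)\subseteq L^{2}(S)$, so every $u\in L^{p}(S)$ automatically lies in $L^{2}(S)\cap L^{p}(S)$ and, because $1+\delta<1+\frac{p-p_{0}}{p_{0}}=\frac{p}{p_{0}}\leq p$, also in $L^{1+\delta}(S)$. Hence all earlier estimates apply to $u$ directly and no separate approximation step is needed.

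Next I would fix such a $\delta$, abbreviate $M:=\|u-\overline{(u)}_{S}\|_{L^{2}(S)}^{2/p}\left(\frac{1}{t}\right)^{1/p}$, and work at those (almost all) $t$ for which Lemma \ref{techniacallemma2} applies. That lemma gives $T(t)u\in W^{1,p}_{\gamma}(S)$ together with $\|\nabla T(t)u\|_{L^{p}(S,\nu;\mathbb{R}^{n})}\leq\left(\frac{2}{|p-2|}\right)^{1/p}M$; since $\delta<\frac{p-p_{0}}{p_{0}}$, Lemma \ref{technicallemma1} then yields $T(t)u\in W^{1,1+\delta}(S)$ and, after multiplying the two constants, exactly $\|\nabla T(t)u\|_{L^{1+\delta}(S;\mathbb{R}^{n})}\leq\Gamma_{\delta,p}M$. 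In parallel, Theorem \ref{mainwithoutasumptions} supplies $\|T(t)u-\overline{(u)}_{S}\|_{L^{1+\delta}(S)}\leq C_{S,1+\delta}\Gamma_{\delta,p}M$. Because $\overline{(u)}_{S}$ is constant, $\nabla\big(T(t)u-\overline{(u)}_{S}\big)=\nabla T(t)u$, so adding the $(1+\delta)$-th powers of these two bounds controls the full Sobolev norm by $(\Gamma_{\delta,p}M)^{1+\delta}\big(C_{S,1+\delta}^{1+\delta}+1\big)$. Taking the $(1+\delta)$-th root and applying the embedding $W^{1,1+\delta}(S)\hookrightarrow L^{\infty}(S)$ with operator norm $\tilde{C}_{S,1+\delta}$ produces \eqref{mainuniformeq} with the asserted constant $C^{\ast}_{S,\delta}=\tilde{C}_{S,1+\delta}\big(C_{S,1+\delta}^{1+\delta}+1\big)^{1/(1+\delta)}$, valid for a.e.\ $t$.

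Finally I would upgrade the estimate from almost every $t$ to every $t\in(0,\infty)$ by the same soft argument used at the end of the proof of Theorem \ref{mainwithoutasumptions}: given an arbitrary $t$, pick $t_{m}\to t$ along which the inequality already holds, pass (by continuity of $T(\cdot)u$ in $L^{1}(S)$) to a subsequence with $T(t_{m})u\to T(t)u$ a.e.\ on $S$, and combine the resulting pointwise bound with $\left(\frac{1}{t_{m}}\right)^{1/p}\to\left(\frac{1}{t}\right)^{1/p}$ and the lower semicontinuity of the essential supremum. This simultaneously delivers $T(t)u\in L^{\infty}(S)$ for every $t$. The only genuinely non-formal ingredient is the Sobolev embedding into $L^{\infty}$, whose applicability rests on the strict inequality $1+\delta>n$; this is exactly where the hypothesis $p_{0}<\frac{p}{n}$ is spent, and it is the single point one must get right, while the remainder of the argument is bookkeeping with constants already manufactured by Lemmas \ref{technicallemma1} and \ref{techniacallemma2} and Theorem \ref{mainwithoutasumptions}.
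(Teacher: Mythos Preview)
Your proposal is correct and follows essentially the same route as the paper: verify the interval is non-empty and that $u\in L^{2}(S)$, then combine Lemma~\ref{techniacallemma2}, Lemma~\ref{technicallemma1}, Theorem~\ref{mainwithoutasumptions} and the Sobolev embedding $W^{1,1+\delta}(S)\hookrightarrow L^{\infty}(S)$ to obtain \eqref{mainuniformeq} for a.e.\ $t$, with exactly the stated constant. The only deviation is in the final step upgrading to every $t$: the paper does not use Fatou along an arbitrary sequence but instead picks an \emph{increasing} sequence $t_{m}\uparrow t$ and invokes the $L^{\infty}$-contraction (Lemma~\ref{nicealemma}) together with the translation invariance (Lemma~\ref{additionlemma}) to get $\|T(t)u-\overline{(u)}_{S}\|_{L^{\infty}(S)}\leq\|T(t_{m})u-\overline{(u)}_{S}\|_{L^{\infty}(S)}$ directly; your a.e.-convergence/Fatou argument is also valid and amounts to the same thing.
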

\begin{proof} First of all note that if $p_{0}< \frac{p}{n}$, then $\frac{p-p_{0}}{p_{0}}>n-1$, consequently $(n-1,\frac{p-p_{0}}{p_{0}}) \neq \emptyset$.\\
	So let $n-1<\delta<\frac{p-p_{0}}{p_{0}}$ and $u \in L^{p}( S )$ which implies $u \in L^{2}( S )$, since $p>np_{0} \geq n \geq 2$.\\
	Now observe  that Lemma \ref{techniacallemma2} implies  $T(t)u \in L^{2}( S ) \cap W^{1,p}_{\gamma}( S )$ for a.e.\linebreak $t \in (0,\infty)$ and consequently Lemma \ref{technicallemma1} yields  $T(t)u \in W^{1,1+\delta}( S )$ and moreover
	\begin{align}
	\label{mmainuniform_1}
	||\nabla T(t)u||_{ L^{1+\delta}( S ;\mathbb{R}^{n})} \leq \Gamma_{\delta,p} ||u-\overline{(u)}_{ S }||^{\frac{2}{p}}_{L^{2}( S )} \left(\frac{1}{t}\right)^{\frac{1}{p}} 
	\end{align}
	for a.e. $t \in (0,\infty)$.\\
	Since $T(t)u \in W^{1,1+\delta}( S )$ it is clear that $T(t)u-\overline{(u)}_{ S } \in W^{1,1+\delta}( S )$ , consequently, since $1+\delta>n$, the Sobolev embedding theorem yields
	\begin{align}
	\label{mmainuniform_4}
	||T(t)u-\overline{(u)}_{ S }||_{L^{\infty}( S )} \leq \tilde{C}_{ S ,1+\delta} ||T(t)u-\overline{(u)}_{ S }||_{W^{1,1+\delta}( S )}
	\end{align}
	for almost every $t \in (0,\infty)$, where $\tilde{C}_{ S ,1+\delta}$ is the operator norm of the continuous injection $W^{1,1+\delta}( S )\hookrightarrow L^{\infty}( S )$. \\
	Hence it follows by virtue of Theorem \ref{mainwithoutasumptions},  and the inequalities \eqref{mmainuniform_1} and \eqref{mmainuniform_4} that
	\begin{align*}
		& 
		\left(\frac{1}{\tilde{C}_{ S ,1+\delta}}||T(t)u-\overline{(u)}_{ S }||_{L^{\infty}( S )} \right)^{1+\delta}\\
		&\leq ||T(t)u-\overline{(u)}_{ S }||^{1+\delta}_{W^{1,1+\delta}( S )}   \\
		&= ||T(t)u-\overline{(u)}_{ S }||_{L^{1+\delta}( S )}^{1+\delta}+||\nabla (T(t)u-\overline{(u)}_{ S })||_{L^{1+\delta}( S ;\mathbb{R}^{n})}^{1+\delta} \\
		&= ||T(t)u-\overline{(u)}_{ S }||_{L^{1+\delta}( S )}^{1+\delta}+||\nabla T(t)u||_{L^{1+\delta}( S ;\mathbb{R}^{n})}^{1+\delta} \\
		&\leq \left( C_{ S ,1+\delta}^{1+\delta}+1\right) \left(\Gamma_{\delta,p} ||u-\overline{(u)}_{ S }||^{\frac{2}{p}}_{L^{2}( S )} \left(\frac{1}{t}\right)^{\frac{1}{p}} \right)^{1+\delta} \\
	\end{align*}
	Consequently, if one defines $C^{\ast}_{ S ,\delta}:=\tilde{C}_{ S ,1+\delta}\left( C_{ S ,1+\delta}^{1+\delta}+1\right)^{\frac{1}{1+\delta}}$, then the preceding estimate yields the claim for almost every $t \in (0,\infty)$.\\
	Now let $t \in (0,\infty)$ and choose a monotonically increasing sequence \linebreak$(t_{m})_{m \in \mathbb{N}} \subseteq (0,\infty)$ such that $\lim \limits_{m \rightarrow \infty}t_{m}=t$, $t_{m}<t$ and such that (\ref{mainuniformeq}) holds  for each $m \in \mathbb{N}$. Then Lemma \ref{nicealemma}, together with Lemma \ref{additionlemma}, yield
	\begin{align*}
	||T(t)u-\overline{(u)}_{ S }||_{L^{\infty}( S )} \leq ||T(t_{m})(u-\overline{(u)}_{ S })||_{L^{\infty}( S )} =   ||T(t_{m})u-\overline{(u)}_{ S }||_{L^{\infty}( S )},
	\end{align*}
	for every $m \in \mathbb{N}$, which verifies the claim for every $t \in (0,\infty)$.
\end{proof}

\begin{rem}  Assume $u \in L^{p}( S )$ and $p_{0}< \frac{p}{n}$.  Moreover let $n-1<\delta<\frac{p-p_{0}}{p_{0}}$. Then the preceding theorem states particularly that $T(t)u \in W^{1,1+\delta}( S )$. Consequently, the Sobolev embedding theorem also yields that $T(t)u$ is H\"older continuous of order $1-\frac{n}{1+\delta}$, or more accurately that there is a representative in the equivalence class which is H\"older continuous of this order. 
\end{rem}

\begin{rem} It is clear that Corollary \ref{corollarylim} implies
	\begin{align*}
	\lim \limits_{t \rightarrow \infty} ||T(t)u-\overline{(u)}_{ S }||_{L^{1}( S )}=0,~\forall u \in L^{2}( S ).
	\end{align*}
	Moreover, Theorem \ref{mainuniform} yields that this convergence is even uniform, if \linebreak$u \in L^{2}( S )\cap L^{p}( S )$ and $p_{0}< \frac{p}{n}$.\\ 
	It is beyond the scope of this paper to obtain a uniform convergence result under more general assumptions. But it will be proven that $L^{q}$-convergence holds under more general assumptions for any $q \in [1,\infty)$. 
\end{rem}

\begin{thm}\label{almostfinallqlemma} Let $q \in [1,\infty)$ and $u \in L^{q}( S )$, then
	\begin{align}
	\lim \limits_{t \rightarrow \infty} T(t)u=\overline{(u)}_{ S } \text{ in } L^{q}( S ).
	\end{align}
\end{thm}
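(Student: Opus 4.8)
The plan is to establish the convergence first on the dense subclass $L^{\infty}(S)\subseteq L^{q}(S)$, where a simple interpolation reduces everything to the already-proven $L^{1}$-decay, and then to propagate it to all of $L^{q}(S)$ by a density argument resting on the non-expansiveness of each $T(t)$ in every $L^{q}$-norm.

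First I would record that each $T(t)$ is non-expansive on $L^{q}(S)$ for $q\in[1,\infty)$. This is immediate from Lemma~\ref{nicealemma}(\ref{l}): since $T(t)u-T(t)v<<u-v$ and $j(x):=|x|^{q}$ defines an element of $J_{0}$, the very definition of $<<$ gives $\int_{S}|T(t)u-T(t)v|^{q}d\lambda\leq\int_{S}|u-v|^{q}d\lambda$, i.e.\ $\|T(t)u-T(t)v\|_{L^{q}(S)}\leq\|u-v\|_{L^{q}(S)}$. I would also note that the averaging map $u\mapsto\overline{(u)}_{S}$ is non-expansive on $L^{q}(S)$ (by H\"older's, equivalently Jensen's, inequality), so that the composite map $u\mapsto T(t)u-\overline{(u)}_{S}$ is Lipschitz with constant $2$, uniformly in $t$.

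Next I would treat $u\in L^{\infty}(S)$. Here Lemma~\ref{additionlemma} gives $T(t)u-\overline{(u)}_{S}=T(t)(u-\overline{(u)}_{S})$, and the third assertion of Lemma~\ref{nicealemma} bounds this, uniformly in $t$, by $\|u-\overline{(u)}_{S}\|_{L^{\infty}(S)}$ in the sup-norm. Since $S$ is bounded we have $u\in L^{2}(S)$, so Corollary~\ref{corollarylim} furnishes $\|T(t)u-\overline{(u)}_{S}\|_{L^{1}(S)}\to0$ with an explicit rate. Combining the two bounds through the elementary interpolation inequality $\|f\|_{L^{q}(S)}\leq\|f\|_{L^{\infty}(S)}^{1-1/q}\|f\|_{L^{1}(S)}^{1/q}$, applied to $f=T(t)u-\overline{(u)}_{S}$, yields $\|T(t)u-\overline{(u)}_{S}\|_{L^{q}(S)}\to0$ and settles the case $u\in L^{\infty}(S)$.

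Finally, for arbitrary $u\in L^{q}(S)$ I would exploit density of $L^{\infty}(S)$ in $L^{q}(S)$. Given $\varepsilon>0$, choose $v\in L^{\infty}(S)$ with $\|u-v\|_{L^{q}(S)}<\varepsilon$. The triangle inequality together with the two non-expansiveness facts from the first step gives
\begin{align*}
\|T(t)u-\overline{(u)}_{S}\|_{L^{q}(S)}\leq 2\varepsilon+\|T(t)v-\overline{(v)}_{S}\|_{L^{q}(S)},
\end{align*}
and letting $t\to\infty$ (using the $L^{\infty}$ case already proven for $v$) shows $\limsup_{t\to\infty}\|T(t)u-\overline{(u)}_{S}\|_{L^{q}(S)}\leq 2\varepsilon$; as $\varepsilon$ is arbitrary the claim follows. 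None of the steps poses a genuine difficulty: the only point demanding care is the uniform-in-$t$ equicontinuity invoked in the density step, which is precisely why establishing the $L^{q}$-non-expansiveness of $T(t)$ at the outset is the crucial ingredient.
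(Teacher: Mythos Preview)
Your proof is correct and follows the same two-step architecture as the paper: first establish the convergence for bounded initial data, then extend to all of $L^{q}(S)$ by density using the $L^{q}$-non-expansiveness of $T(t)$ (which the paper also invokes, via Lemma~\ref{nicealemma}).

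The one genuine difference lies in how the bounded case is handled. The paper approximates $u$ by its truncations $\tau_{k}(u)$, passes from the $L^{1}$-convergence of Corollary~\ref{corollarylim} to a.e.\ convergence along a subsequence, and then applies dominated convergence (using the uniform $L^{\infty}$-bound $2k$) to upgrade to $L^{q}$-convergence; a subsequence-of-subsequence argument then recovers convergence as $t\to\infty$. You instead go directly through the interpolation inequality $\|f\|_{L^{q}}\le\|f\|_{L^{\infty}}^{1-1/q}\|f\|_{L^{1}}^{1/q}$, which is cleaner: it avoids both the subsequence extraction and the dominated-convergence step, and it even preserves the explicit $t^{-1/(pq)}$ decay rate inherited from Corollary~\ref{corollarylim}. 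The paper's route, on the other hand, is slightly more self-contained in that it does not appeal to Lemma~\ref{additionlemma} for the $L^{\infty}$-bound (it uses only $\|T(t)\tau_{k}(u)\|_{L^{\infty}}\le k$), but this is a marginal distinction.
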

\begin{proof} Let $q \in [1,\infty)$, $u \in L^{q}( S )$, $k \in (0,\infty)$ and let $\tau_{k}:\mathbb{R} \rightarrow \mathbb{R}$ denote the standard truncation function.\\
	Let $(\tilde{t}_{m})_{m \in \mathbb{N}} \subseteq [0,\infty)$ be an arbitrary sequence such that $\lim \limits_{m \rightarrow \infty}\tilde{t}_{m}=\infty$. Moreover, let $(t_{m})_{m \in \mathbb{N}}$ be a subsequence such that
	\begin{align}
	\label{almostfinallqlemmaproof1}
	\lim \limits_{m \rightarrow \infty} T(t_{m})\tau_{k}(u)= \overline{(\tau_{k}(u))}_{ S },\text{ a.e. on }  S 
	\end{align}
	(Corollary \ref{corollarylim} ensures the existence of such a subsequence, since \linebreak$\tau_{k}(u) \in L^{2}( S )$.)\\
	Now observe that Lemma \ref{nicealemma} implies 
	\begin{align*}
	||T(t_{m})\tau_{k}(u)-\overline{(\tau_{k}(u))}_{ S }||_{L^{\infty}( S )} \leq 2k, 
	\end{align*}
	for all $m \in \mathbb{N}$. Consequently, this, together with (\ref{almostfinallqlemmaproof1}) yields, by virtue of dominated convergence, that $\lim \limits_{m \rightarrow \infty} T(t_{m})\tau_{k}(u)=\overline{(\tau_{k}(u))}_{ S }$ in $L^{q}( S )$ and therefore
	\begin{align}
	\label{almostfinallqlemmaproof3}
	\lim \limits_{t \rightarrow \infty} ||T(t)\tau_{k}(u)-\overline{(\tau_{k}(u))}_{ S }||_{L^{q}( S )}=0,~\forall k \in (0,\infty).
	\end{align} 
	Observe that clearly $\lim \limits_{k \rightarrow \infty} \tau_{k}(u)=u$ a.e. on $ S $ and that $|\tau_{k}(u)-u|^{q} \leq (2|u|)^{q}$ for all $k  \in (0,\infty)$. Consequently Lebesgue's theorem yields
	\begin{align}
	\label{almostfinallqlemmaproof2}
	\lim \limits_{k \rightarrow \infty} \tau_{k}(u)=u,~\text{in   } L^{q}( S ).
	\end{align}
	Now let $\varepsilon>0$ and choose $k_{0} \in (0,\infty)$ sufficiently large such that
	\begin{align}
	\label{almostfinallqlemmaproof5}
	\max(||\tau_{k_{0}}(u)-u||_{L^{q}( S )},||\overline{(\tau_{k_{0}}(u))}_{ S }-\overline{(u)}_{ S }||_{L^{q}( S )})<\frac{\varepsilon}{3},
	\end{align}
	which is possible, due to (\ref{almostfinallqlemmaproof2}).\\
	Moreover, (\ref{almostfinallqlemmaproof3}) yields the existence of $t_{0} \in (0,\infty)$ such that
	\begin{align}
	\label{almostfinallqlemmaproof4}
	||T(t)\tau_{k_{0}}(u)-\overline{(\tau_{k_{0}}(u))}_{ S }||_{L^{q}( S )} < \frac{\varepsilon}{3},~\forall t \geq t_{0}.
	\end{align}
	Finally, it follows by combining (\ref{almostfinallqlemmaproof5}), (\ref{almostfinallqlemmaproof4}) and by using Lemma \ref{nicealemma}, that
	\begin{align*}
		||T(t)u-\overline{(u)}_{ S }||_{L^{q}( S )}<  \varepsilon
	\end{align*}
	for all $t \geq t_{0}$.
\end{proof}

\section{Extinction of solutions}

The basic idea to prove extinction of solutions is to apply the following lemma, which is stated, but not proven, in \cite{extinction}, Lemma 2.2. Even though this lemma seems to be in common use, the present author was unable to find a proof in the literature. Therefore, the proof will be given.

\begin{lem}\label{importantlemma} Let $k \in (0,1)$, $\alpha \in (0,\infty)$ and let $f:[0,\infty)\rightarrow [0,\infty)$ be locally Lipschitz continuous, i.e.  $f|_{[0,\tilde{t}]}$ is Lipschitz continuous for any $\tilde{t} \in (0,\infty)$. Moreover, assume
	\begin{align*}
	f^{\prime}(t)+\alpha f(t)^{k} \leq 0
	\end{align*}
	for a.e. $t \in (0,\infty)$ and introduce
	\begin{align*}
	T^{\ast}:= \frac{f(0)^{1-k}}{\alpha(1-k)},
	\end{align*}
	then $f(t)=0$ for all $t \in [T^{\ast},\infty)$.
\end{lem}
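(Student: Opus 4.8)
The plan is to reduce the statement to the single assertion $f(T^{\ast})=0$ and then to exploit monotonicity. First I would observe that the differential inequality forces $f$ to be non-increasing: since $f\geq 0$ one has $f(t)^{k}\geq 0$, so the hypothesis gives $f^{\prime}(t)\leq -\alpha f(t)^{k}\leq 0$ for a.e. $t$. Because $f$ is locally Lipschitz, it is absolutely continuous on every compact interval, hence $f(t)-f(s)=\int_{s}^{t}f^{\prime}$, and an a.e. non-positive derivative yields that $f$ is non-increasing. Combined with $f\geq 0$ this shows that once $f$ vanishes it stays zero; therefore it suffices to prove $f(T^{\ast})=0$, since then $f\equiv 0$ on $[T^{\ast},\infty)$.

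To prove $f(T^{\ast})=0$ I would argue by contradiction, assuming $f(T^{\ast})>0$. By the monotonicity just established, $f$ then takes values in the compact interval $[f(T^{\ast}),f(0)]\subseteq(0,\infty)$ on all of $[0,T^{\ast}]$. On this interval the function $g:=f^{1-k}$ is well behaved, and the idea is that $g$ decreases at a fixed linear rate: formally $g^{\prime}=(1-k)f^{-k}f^{\prime}\leq (1-k)f^{-k}(-\alpha f^{k})=-\alpha(1-k)$. Integrating this over $[0,T^{\ast}]$ gives
\begin{align*}
g(T^{\ast})\leq g(0)-\alpha(1-k)T^{\ast}=f(0)^{1-k}-\alpha(1-k)\frac{f(0)^{1-k}}{\alpha(1-k)}=0.
\end{align*}
Since $g=f^{1-k}\geq 0$, this forces $f(T^{\ast})=0$, contradicting the assumption and completing the reduction.

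The step requiring the most care is the justification of the chain rule $g^{\prime}=(1-k)f^{-k}f^{\prime}$ together with the fundamental theorem of calculus for $g$. The map $x\mapsto x^{1-k}$ has an unbounded derivative at $0$ (as $1-k\in(0,1)$), so $g$ need not be absolutely continuous near a zero of $f$, and a naive global substitution would be illegitimate. This is precisely why I would first pass to the contradiction hypothesis $f(T^{\ast})>0$: on $[0,T^{\ast}]$ the values of $f$ are then bounded away from $0$, so $x\mapsto x^{1-k}$ restricted to $[f(T^{\ast}),f(0)]$ is $C^{1}$, hence Lipschitz, and composing a $C^{1}$ function with the absolutely continuous $f$ produces an absolutely continuous $g$ for which the chain rule holds a.e. and the integration above is valid. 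Everything else is an elementary computation.
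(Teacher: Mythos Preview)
Your proof is correct. Both you and the paper reduce to showing $f(T^{\ast})=0$ via the observation that $f$ is non-increasing, and both argue by contradiction assuming $f(T^{\ast})>0$ so that $f$ is bounded away from zero on $[0,T^{\ast}]$. The difference is structural: the paper first proves a comparison principle, showing that the given sub-solution is dominated by a locally Lipschitz solution of the \emph{equality} $f'=-\alpha f^{k}$ with the same initial datum, and then analyses the equality case by writing $T^{\ast}=\int_{0}^{T^{\ast}}\frac{f'(t)}{-\alpha f(t)^{k}}\,dt$ and substituting. You bypass the comparison step entirely and work directly with the inequality via the substitution $g=f^{1-k}$, which linearises the problem to $g'\leq -\alpha(1-k)$. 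This is the same integral written in differential form, but applied to the sub-solution itself rather than to an auxiliary exact solution; your route is therefore shorter and avoids having to produce (or postulate) the comparison function. Your explicit discussion of why the chain rule for $g$ is legitimate only after assuming $f(T^{\ast})>0$ is exactly the technical point that justifies the contradiction setup in both arguments.
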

\begin{proof} Let $k \in (0,1)$, $\alpha \in (0,\infty)$ and let $f,~\tilde{f}:[0,\infty)\rightarrow [0,\infty)$ be locally Lipschitz continuous. Moreover, assume $f(0)=\tilde{f}(0)=:a$ and
	\begin{enumerate}
		\item $f^{\prime}(t)=-\alpha f(t)^{k}$ for a.e. $t \in (0,\infty)$, 
		\item $\tilde{f}^{\prime}(t)\leq-\alpha\tilde{f}(t)^{k}$ for a.e. $t \in (0,\infty)$.
	\end{enumerate}
	It will be proven that $0\leq\tilde{f}(t) \leq f(t)$ for all $t \in [0,\infty)$ which obviously implies that it suffices to prove that $f(t)=0$ for all $t\geq T^{\ast}:=\frac{a^{1-k}}{\alpha(1-k)}$.\\
	Assume there is $t_{1}>0$ such that $\tilde{f}(t_{1})>f(t_{1})$, then there is, since both functions are continuous and since $f(0)=\tilde{f}(0)$, a $t_{0} \in [0,t_{1})$ such that
	\begin{align}
	\label{niceextinctlemmaproof1}
	\tilde{f}(t)>f(t),~\forall t \in (t_{0},t_{1}] \text{ and } \tilde{f}(t_{0})=f(t_{0}).
	\end{align}
	But this implies
	\begin{align*}
		f(t_{1})-\tilde{f}(t_{1})
		& =  f(t_{1})-\tilde{f}(t_{1}) -(f(t_{0})-\tilde{f}(t_{0})) \\
		& =  \int \limits_{t_{0}} \limits^{t_{1}}f^{\prime}(t)-\tilde{f}^{\prime}(t)dt\\ 
		& \geq  \int \limits_{t_{0}} \limits^{t_{1}}-\alpha f(t)^{k}+\alpha\tilde{f}(t)^{k}dt\\ 
		& \geq  0,
	\end{align*}
	which yields $f(t_{1})\geq\tilde{f}(t_{1})$ and therefore contradicts (\ref{niceextinctlemmaproof1}).\\
	Now it will be proven that $f(t)=0$ for all $t\geq T^{\ast}:=\frac{a^{1-k}}{\alpha(1-k)}$ which then implies the claim. \\
	First of all note that $f^{\prime}$ can be extended to a continuous function on $[0,\infty)$. Consequently, $f$ is continuously differentiable on $(0,\infty)$.\\ 
	Moreover, $f^{\prime} \leq 0$ which yields that $f$ is monotonically decreasing. Hence, if $f(\tau)=0$ then $f(t)=0$ for all $t \geq \tau$, since $f \geq 0$ by assumption.\\
	Now introduce $\tau:=\inf \{ t \geq 0 :~f(t)=0\}$. The claim follows if $\tau \leq T^{\ast}$.\\ 
	Consequently let us contradict $\tau > T^{\ast}$.  If $\tau > T^{\ast}$ then $f(t)>0$ for all $t \in [0,T^{\ast}]$ and consequently $\frac{f^{\prime}(t)}{-\alpha f(t)^{k}}=1$ for all $t \in [0,T^{\ast}]$, which yields by substituting that
	\begin{align*}
		T^{\ast}
		& = \int \limits_{0} \limits^{T^{\ast}} \frac{f^{\prime}(t)}{-\alpha f(t)^{k}} dt \\
		& = \int \limits_{f(0)} \limits^{f(T^{\ast})} \frac{1}{-\alpha t^{k}} dt \\
		& = \frac{f(0)^{1-k}}{\alpha(1-k)}- \frac{1}{\alpha(1-k)} f(T^{\ast})^{1-k} \\
		& = T^{\ast}-  \frac{1}{\alpha} \frac{1}{1-k} f(T^{\ast})^{1-k} \\
	\end{align*}
	and consequently $f(T^{\ast})=0$ which contradicts  $\tau > T^{\ast}$.
\end{proof}

Here and in everything which follows let $f_{u}:[0,\infty)\rightarrow [0,\infty)$ be defined by
\begin{align*}
f_{u}(t):= \int \limits_{ S } \left(T(t)u-\overline{(u)}_{ S }\right)^{2}d \lambda
\end{align*}
for any $t \in [0,\infty)$ and $u \in L^{2}( S )$. 

\begin{lem} Let $u \in D(A)$, then $f_{u}$ is locally Lipschitz continuous. 
\end{lem}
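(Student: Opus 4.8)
The plan is to combine two facts that are already at hand for initial data in $D(A)$: the $L^{1}$-Lipschitz continuity of the orbit $t \mapsto T(t)u$, and the $L^{\infty}$-contractivity of the semigroup (Lemma \ref{nicealemma}). The tempting but wrong route is to estimate $|f_{u}(t)-f_{u}(s)|$ by $2||T(t)u-\overline{(u)}_{S}||_{L^{2}(S)}||T(t)u-T(s)u||_{L^{2}(S)}$ and then control the $L^{2}$-difference of the orbit. Doing so either yields only H\"older-$\frac{1}{2}$ regularity (via the interpolation $||\cdot||_{L^{2}(S)}^{2}\leq ||\cdot||_{L^{1}(S)}||\cdot||_{L^{\infty}(S)}$) or forces one to invoke the regularization estimate of Lemma \ref{techniacallemma2}, whose bound blows up like $\frac{1}{t}$ as $t\to 0$ and is therefore useless for Lipschitz continuity on intervals of the form $[0,\tilde{t}]$. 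The whole point of the argument is to sidestep this by pairing the orbit \emph{difference} in $L^{1}$ with a factor that is bounded in $L^{\infty}$.

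First I would record that $u \in D(A)$ implies $u \in L^{\infty}(S)$, and that, exactly as used in the proof of Lemma \ref{comlemma1}, \cite{BenilanBook}, Lemma 7.8 guarantees that $(T(\cdot)u)|_{[0,\tilde{t}]}$ is Lipschitz continuous as a map into $(L^{1}(S),||\cdot||_{L^{1}(S)})$. Denote by $L \geq 0$ a corresponding Lipschitz constant, so that $||T(t)u-T(s)u||_{L^{1}(S)} \leq L|t-s|$ for all $s,t \in [0,\tilde{t}]$.

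Next, for $s,t \in [0,\tilde{t}]$ I would use the algebraic factorization
\begin{align*}
f_{u}(t)-f_{u}(s) = \int \limits_{S} \left(T(t)u-T(s)u\right)\left(T(t)u+T(s)u-2\overline{(u)}_{S}\right) d\lambda.
\end{align*}
The second factor is bounded pointwise almost everywhere: since $|\overline{(u)}_{S}| \leq ||u||_{L^{\infty}(S)}$ and $||T(r)u||_{L^{\infty}(S)} \leq ||u||_{L^{\infty}(S)}$ for all $r$ by Lemma \ref{nicealemma}, one has
\begin{align*}
\left|T(t)u+T(s)u-2\overline{(u)}_{S}\right| \leq ||T(t)u||_{L^{\infty}(S)}+||T(s)u||_{L^{\infty}(S)}+2|\overline{(u)}_{S}| \leq 4||u||_{L^{\infty}(S)}
\end{align*}
a.e. on $S$. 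Combining this pointwise bound with the $L^{1}$-Lipschitz estimate yields
\begin{align*}
|f_{u}(t)-f_{u}(s)| \leq 4||u||_{L^{\infty}(S)} \int \limits_{S} |T(t)u-T(s)u| d\lambda \leq 4||u||_{L^{\infty}(S)}\, L\, |t-s|,
\end{align*}
which is precisely Lipschitz continuity of $f_{u}$ on $[0,\tilde{t}]$. As $\tilde{t} \in (0,\infty)$ is arbitrary, $f_{u}$ is locally Lipschitz continuous.

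The only genuinely delicate point is the decision to use the $L^{\infty}$--$L^{1}$ pairing in place of the symmetric $L^{2}$--$L^{2}$ one; once that choice is made, the statement reduces to the two cited properties and a single elementary estimate, and I expect no further obstacle.
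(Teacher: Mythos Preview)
Your proof is correct and follows essentially the same approach as the paper: both use the $L^{1}$-Lipschitz continuity of the orbit from \cite{BenilanBook}, Lemma 7.8, together with the $L^{\infty}$-contractivity from Lemma \ref{nicealemma}, and pair the $L^{1}$-difference $T(t)u-T(s)u$ against an $L^{\infty}$-bounded factor via the identity $(a-c)^{2}-(b-c)^{2}=(a-b)(a+b-2c)$. The paper splits this factor into $(a+b)(a-b)-2c(a-b)$ and bounds the two pieces separately, arriving at the Lipschitz constant $2L(||u||_{L^{\infty}(S)}+|\overline{(u)}_{S}|)$ instead of your $4L||u||_{L^{\infty}(S)}$, but this is a cosmetic difference.
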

\begin{proof} Let $u \in D(A)$ and $\tilde{t}>0$ be given. Moreover, let $L$ denote the Lipschitz constant of $[0,\tilde{t}] \ni t \mapsto T(t)u \in (L^{1}( S ),||\cdot||_{L^{1}( S )})$. (As $u \in D(A)$, the Lipschitz continuity follows from \cite{BenilanBook}, Lemma 7.8.)\\
	Now Lemma \ref{nicealemma} yields that
	\begingroup
	\allowdisplaybreaks
	\begin{align*}
		&  
		|f_{u}(t_{1})-f_{u}(t_{2})|\\ 
		& =  \left| \int \limits_{ S } \left(T(t_{1})u-\overline{(u)}_{ S }\right)^{2}- \left(T(t_{2})u-\overline{(u)}_{ S }\right)^{2}d \lambda \right|\\
		& \leq   \int \limits_{ S }\left| \left(T(t_{1})u\right)^{2}-\left(T(t_{2})u\right)^{2}-2\overline{(u)}_{ S }T(t_{1})u+2\overline{(u)}_{ S }T(t_{2})u\right|d \lambda \\
		& \leq  \left|\left| \left(T(t_{1})u+T(t_{2})u\right)\left(T(t_{1})u-T(t_{2})u\right)\right|\right|_{L^{1}( S )}+2|\overline{(u)}_{ S }|L|t_{1}-t_{2}|\\
		& \leq  \left|\left|T(t_{1})u+T(t_{2})u\right|\right|_{L^{\infty}( S )}\left|\left|T(t_{1})u-T(t_{2})u\right|\right|_{L^{1}( S )}+2|\overline{(u)}_{ S }|L|t_{1}-t_{2}|\\
		& \leq  2||u||_{L^{\infty}( S )}L|t_{1}-t_{2}|+2|\overline{(u)}_{ S }|L|t_{1}-t_{2}|\\
		& =  2L(||u||_{L^{\infty}( S )}+|\overline{(u)}_{ S }|)|t_{1}-t_{2}|\\
	\end{align*}
	\endgroup
	for any $t_{1},t_{2} \in [0,\tilde{t}]$. (Note that indeed $u \in L^{\infty}( S )$, since $u \in D(A)$.)
\end{proof}

\begin{lem}\label{difflemma} Let $u \in D(A)$, then $f_{u}$ is differentiable almost everywhere on $(0,\infty)$ and
	\begin{align}
	\label{difflemmaeq}
	f_{u}^{\prime}(t)= -2||\nabla T(t)u||_{L^{p}( S ,\nu;\mathbb{R}^{n})}^{p}
	\end{align}
	for almost every $t \in (0,\infty)$.
\end{lem}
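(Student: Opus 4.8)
The plan is to differentiate $f_u$ directly and then identify the resulting integral by means of the operator $A$. Since $u \in D(A) \subseteq L^\infty(S)$, Lemma \ref{nicealemma} ensures that for almost every $t \in (0,\infty)$ one has $T(t)u \in D(A)$ and $-T'(t)u = AT(t)u$; moreover $\|T(t)u\|_{L^\infty(S)} \leq \|u\|_{L^\infty(S)}$ for \emph{all} $t$, and $T(\cdot)u$ is differentiable in $L^1(S)$ at almost every $t$ by Theorem \ref{mainold}. I would fix a time $t$ at which all of these properties hold simultaneously (a set of full measure) and show that $f_u$ is differentiable there with the asserted value.

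First I would expand the difference quotient via $a^2-b^2=(a+b)(a-b)$, using the conservation of mass $\overline{(T(t)u)}_S=\overline{(u)}_S$ from Lemma \ref{comlemma1}, to obtain
\begin{align*}
\frac{f_u(t+h)-f_u(t)}{h}=\int_S \big(T(t+h)u+T(t)u-2\overline{(u)}_S\big)\,\frac{T(t+h)u-T(t)u}{h}\,d\lambda .
\end{align*}
The goal is to let $h\to 0$ and arrive at $f_u'(t)=2\int_S (T(t)u-\overline{(u)}_S)\,T'(t)u\,d\lambda$. This is the delicate step, and it is precisely where the uniform $L^\infty$ bound is indispensable: the first factor is bounded in $L^\infty(S)$ by $2\|u\|_{L^\infty(S)}+2|\overline{(u)}_S|$ uniformly in $h$ and converges to $2(T(t)u-\overline{(u)}_S)$ in $L^1(S)$, while the second factor converges to $T'(t)u$ in $L^1(S)$. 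I would split the discrepancy into a term bounded by the $L^\infty$ bound times $\|\tfrac{1}{h}(T(t+h)u-T(t)u)-T'(t)u\|_{L^1(S)}\to 0$, and a second term $\int_S (T(t+h)u-T(t)u)\,T'(t)u\,d\lambda$ handled by dominated convergence with dominating function $2\|u\|_{L^\infty(S)}|T'(t)u|\in L^1(S)$ after passing to an a.e.-convergent subsequence. A standard subsequence argument then shows the full limit exists and has the claimed value.

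Once the differentiation under the integral is justified, I would substitute $T'(t)u=-AT(t)u$ to get $f_u'(t)=-2\int_S (T(t)u-\overline{(u)}_S)\,AT(t)u\,d\lambda$, and then invoke the defining identity of $A$ from Definition \ref{defo} with the test function $\varphi:=T(t)u-\overline{(u)}_S$. This $\varphi$ is admissible since $T(t)u\in D(A)\subseteq W^{1,p}_\gamma(S)\cap L^\infty(S)$ and adding a constant is harmless, and it satisfies $\nabla\varphi=\nabla T(t)u$. The identity then reads
\begin{align*}
\int_S AT(t)u\,\big(T(t)u-\overline{(u)}_S\big)\,d\lambda=\int_S \gamma\,|\nabla T(t)u|^{p-2}\nabla T(t)u\cdot\nabla T(t)u\,d\lambda=\|\nabla T(t)u\|_{L^p(S,\nu;\mathbb{R}^n)}^{p},
\end{align*}
which yields \eqref{difflemmaeq}.

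The main obstacle is the interchange of limit and integral in the second paragraph. The differentiability of $T(\cdot)u$ is available only in $L^1(S)$, whereas $f_u$ is built from an $L^2$-quantity, so no chain rule in $L^2$ is directly at hand; the resolution rests entirely on the uniform $L^\infty$ bound of Lemma \ref{nicealemma}, which bridges the gap between $L^1$ and $L^2$ and makes the dominated-convergence argument go through.
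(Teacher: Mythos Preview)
Your argument is correct and follows the same overall scheme as the paper --- expand the difference quotient via $a^{2}-b^{2}=(a+b)(a-b)$, pass to the limit, then identify the result using the weak formulation of $A$ with test function $T(t)u-\overline{(u)}_{S}$ --- but the way you justify the limit step is genuinely more elementary. The paper shifts to $v:=u-\overline{(u)}_{S}$ and invokes the regularization bound from \cite{cao}, Theorems 4.2 and 4.4, to dominate the entire integrand in $L^{\infty}(S)$ uniformly in $h$, namely
\[
\left\|\frac{(T(t+h)v)^{2}-(T(t)v)^{2}}{h}\right\|_{L^{\infty}(S)}\leq \frac{4\|v\|_{L^{\infty}(S)}^{2}}{|p-2|\,t},
\]
which in particular requires an $L^{\infty}$ bound on the difference quotient $\tfrac{1}{h}(T(t+h)v-T(t)v)$. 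You avoid this entirely: your splitting uses only the $L^{\infty}$ contraction of Lemma \ref{nicealemma} on the factor $T(t+h)u+T(t)u-2\overline{(u)}_{S}$ together with the $L^{1}$-convergence of the difference quotient, plus a dominated-convergence/subsequence argument for the remainder $\int_{S}(T(t+h)u-T(t)u)\,T'(t)u\,d\lambda$ with dominating function $2\|u\|_{L^{\infty}(S)}|T'(t)u|$. This buys you independence from the regularization effect (and hence from the hypothesis $p\neq 2$ at this particular step), at the cost of a slightly more delicate two-term estimate and a sub-subsequence argument. The paper's route is shorter once \cite{cao} is granted, and it computes only the right derivative and then appeals to the local Lipschitz continuity of $f_{u}$ established in the previous lemma; your argument handles two-sided $h\to 0$ directly.
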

\begin{proof} Let $u \in D(A)$, $v:=u-\overline{(u)}_{ S }$. Let $t \in (0,\infty)$ be such that $T(\cdot)v$ is differentiable at $t$ and let $(h_{m})_{m \in \mathbb{N}} \subseteq (0,\infty)$ such that $\lim \limits_{m \rightarrow \infty}h_{m} = 0$.\\
	It is clear that
	\begin{align*}
	\lim \limits_{m \rightarrow \infty} \frac{T(t+h_{m})v-T(t)v}{h_{m}} = T^{\prime}(t)v \text{ and } \lim \limits_{m \rightarrow \infty} T(t+h_{m})v+T(t)v=2T(t)v .
	\end{align*}	
	in $L^{1}( S )$.\\
	Consequently, by passing to a subsequence if necessary, this convergences holds also almost everywhere, which yields
	\begin{align}
	\label{diffproof1}
	\lim \limits_{m \rightarrow \infty} \frac{(T(t+h_{m})v)^{2}-(T(t)v)^{2}}{h_{m}} =  2T(t)v T^{\prime}(t)v \text{ a.e. on }  S .
	\end{align}
	It follows from \cite{cao}, Theorem 4.2 and 4.4 that
	\begin{align*}
	\left|\left| \frac{(T(t+h_{m})v)^{2}-(T(t)v)^{2}}{h_{m}} \right|\right|_{L^{\infty}( S )}  \leq \frac{4||v||^{2}_{L^{\infty}( S )}}{|p-2|t},
	\end{align*}
	for all $m \in \mathbb{N}$. This, together with (\ref{diffproof1}) implies, by virtue of dominated convergence, that
	\begin{align}
	\lim \limits_{m \rightarrow \infty} \frac{f_{u}(t+h_{m})-f_{u}(t)}{h_{m}} =2\int \limits_{ S }T(t)\left(u-\overline{(u)}_{ S }\right) T^{\prime}(t)\left(u-\overline{(u)}_{ S }\right)d\lambda 
	\end{align}
	and consequently one infers, by using Lemma \ref{additionlemma} and Lemma \ref{nicealemma}, that
	\begin{align*}
		\lim \limits_{m \rightarrow \infty} \frac{f_{u}(t+h_{m})-f_{u}(t)}{h_{m}} 
		& =  2\int \limits_{ S }\left(T(t)(u)-\overline{(u)}_{ S }\right) T^{\prime}(t)u d\lambda\\ 
		& = 2\int \limits_{ S }T(t)(u) T^{\prime}(t)ud\lambda - 2\int \limits_{ S }\overline{(u)}_{ S } T^{\prime}(t)ud\lambda\\
		& =  -2||\nabla T(t)u||_{L^{p}( S ,\nu;\mathbb{R}^{n})}^{p}.
	\end{align*}
	The preceding calculation yields that the right derivative of $f_{u}$ is given by the right hand side of (\ref{difflemmaeq}). Consequently (\ref{difflemmaeq}) holds, since any real valued, locally Lipschitz continuous function is differentiable almost everywhere. 
\end{proof}

\begin{lem}\label{extinctionlemma} Let $u \in D(A)$ and assume that the interval $\left(\frac{p_{0}(n-2)}{n+2}+p_{0},2\right)$ is nonempty. Moreover, assume $p \in \left(\frac{p_{0}(n-2)}{n+2}+p_{0},2\right)$, then there is a constant $T^{\ast}_{u,\gamma,p,n, S }$ such that
	\begin{align*} 
	T(t)u=\overline{(u)}_{ S } \text{ a.e. on }  S 
	\end{align*}
	for all $t \geq T^{\ast}_{u,\gamma,p,n, S }$.\\
	In addition, $T^{\ast}_{u,\gamma,p,n, S }$ can be chosen as
	\begin{align*}
	T^{\ast}_{u,\gamma,p,n, S } := \frac{\left( \int \limits_{ S } (u-\overline{(u)}_{ S })^{2} d \lambda \right)^{1-\frac{p}{2}}}{2-p}  \tilde{C}_{ S }^{p}\left(C_{ S ,\frac{2n}{n+2}}^{\frac{2n}{n+2}}+1\right)^{\frac{np+2p}{2n}}\tilde{\Gamma}_{n,p} <\infty,
	\end{align*}
	where the constant $\tilde{C}_{ S }$ denotes the operator norm of the continuous injection \linebreak$W^{1,\frac{2n}{n+2}}( S ) \hookrightarrow L^{2}( S )$ and
	\begin{align*}
		\tilde{\Gamma}_{n,p}:=\left( \int \limits_{ S } \gamma^{\frac{2n}{2n-np-2p}}d\lambda \right)^{\frac{np+2p-2n}{2n}}<\infty.
	\end{align*}
\end{lem}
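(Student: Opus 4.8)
The plan is to reduce the assertion to the scalar differential inequality treated in Lemma \ref{importantlemma}, applied to the function $f_{u}$. Because $p<2$, the exponent $k:=p/2$ will lie in $(0,1)$, which is exactly the regime covered by Lemma \ref{importantlemma}, and the extinction time that lemma produces will be arranged to match the claimed $T^{\ast}_{u,\gamma,p,n,S}$. Throughout, I would use that $u\in D(A)\subseteq L^{\infty}(S)\subseteq L^{2}(S)\cap L^{p}(S)$, so that Lemma \ref{techniacallemma2}, Lemma \ref{difflemma} and the conservation of mass (Lemma \ref{comlemma1}) are all at my disposal.

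The decisive step is the choice $\delta:=\frac{n-2}{n+2}$, which makes $1+\delta=\frac{2n}{n+2}$ the Sobolev exponent whose conjugate equals $2$. The hypothesis $p>\frac{p_{0}(n-2)}{n+2}+p_{0}=\frac{2np_{0}}{n+2}=p_{0}(1+\delta)$ is equivalent to $\delta<\frac{p-p_{0}}{p_{0}}$, so Lemma \ref{technicallemma1} applies with this $\delta$; moreover it forces $\frac{p}{1+\delta}>p_{0}$, whence Lemma \ref{p0lemma} guarantees $\tilde{\Gamma}_{n,p}<\infty$. By Lemma \ref{techniacallemma2} one has $T(t)u\in W^{1,p}_{\gamma}(S)$ for a.e.\ $t$, and hence $T(t)u\in W^{1,\frac{2n}{n+2}}(S)$ by Lemma \ref{technicallemma1}.

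Next I would chain three estimates, valid for a.e.\ $t$: the critical Sobolev embedding $W^{1,\frac{2n}{n+2}}(S)\hookrightarrow L^{2}(S)$ with operator norm $\tilde{C}_{S}$; then Poincar\'e's inequality together with $\overline{(T(t)u)}_{S}=\overline{(u)}_{S}$ from Lemma \ref{comlemma1}; and finally \eqref{smartdduestimate}. Writing $K:=\left(\int_{S}\gamma^{\frac{2n}{2n-np-2p}}d\lambda\right)^{\frac{np+2p-2n}{2np}}$ for the constant appearing in \eqref{smartdduestimate} at this $\delta$, these combine into
\begin{align*}
||T(t)u-\overline{(u)}_{S}||_{L^{2}(S)}\leq \tilde{C}_{S}\left(C_{S,\frac{2n}{n+2}}^{\frac{2n}{n+2}}+1\right)^{\frac{n+2}{2n}}K\,||\nabla T(t)u||_{L^{p}(S,\nu;\mathbb{R}^{n})}.
\end{align*}
A direct computation shows $K^{p}=\tilde{\Gamma}_{n,p}$; raising the last display to the $p$-th power and using $f_{u}(t)=||T(t)u-\overline{(u)}_{S}||^{2}_{L^{2}(S)}$ yields
\begin{align*}
f_{u}(t)^{\frac{p}{2}}\leq \tilde{C}_{S}^{p}\left(C_{S,\frac{2n}{n+2}}^{\frac{2n}{n+2}}+1\right)^{\frac{np+2p}{2n}}\tilde{\Gamma}_{n,p}\,||\nabla T(t)u||^{p}_{L^{p}(S,\nu;\mathbb{R}^{n})}.
\end{align*}

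Finally, Lemma \ref{difflemma} gives $f_{u}'(t)=-2||\nabla T(t)u||^{p}_{L^{p}(S,\nu;\mathbb{R}^{n})}$ for a.e.\ $t$, so the previous display rearranges into $f_{u}'(t)+\alpha f_{u}(t)^{p/2}\leq 0$ with $\alpha:=2\left[\tilde{C}_{S}^{p}\left(C_{S,\frac{2n}{n+2}}^{\frac{2n}{n+2}}+1\right)^{\frac{np+2p}{2n}}\tilde{\Gamma}_{n,p}\right]^{-1}$. Since the preceding lemma already established that $f_{u}$ is locally Lipschitz, Lemma \ref{importantlemma} with $k=p/2\in(0,1)$ applies and gives $f_{u}(t)=0$, i.e.\ $T(t)u=\overline{(u)}_{S}$ a.e.\ on $S$, for all $t\geq \frac{f_{u}(0)^{1-p/2}}{\alpha(1-p/2)}$; inserting $\alpha$ and $f_{u}(0)=\int_{S}(u-\overline{(u)}_{S})^{2}d\lambda$ and simplifying reproduces the stated formula for $T^{\ast}_{u,\gamma,p,n,S}$. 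The one genuinely delicate point is the choice $\delta=\frac{n-2}{n+2}$: it is forced by insisting that the Sobolev target be $L^{2}$, and it is simultaneously admissible for Lemma \ref{technicallemma1} precisely under the stated hypothesis on $p$; the rest is careful bookkeeping of the exponents.
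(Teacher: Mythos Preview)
Your proof is correct and follows essentially the same route as the paper: choose $\delta=\frac{n-2}{n+2}$ so that $1+\delta=\frac{2n}{n+2}$, verify via the hypothesis on $p$ that this $\delta$ is admissible in Lemma~\ref{technicallemma1}, chain the Sobolev embedding $W^{1,\frac{2n}{n+2}}(S)\hookrightarrow L^{2}(S)$ with Poincar\'e (using $\overline{(T(t)u)}_{S}=\overline{(u)}_{S}$) and \eqref{smartdduestimate} to get $f_{u}(t)^{p/2}\leq 2\alpha^{-1}\|\nabla T(t)u\|^{p}_{L^{p}(S,\nu;\mathbb{R}^{n})}$, then feed Lemma~\ref{difflemma} into Lemma~\ref{importantlemma}. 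The only cosmetic difference is that you obtain $T(t)u\in W^{1,p}_{\gamma}(S)$ via Lemma~\ref{techniacallemma2}, whereas the paper gets it directly from $T(t)u\in D(A)$ through Lemma~\ref{nicealemma}.iv; both are valid since $u\in D(A)\subseteq L^{\infty}(S)$.
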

\begin{proof} Let $u \in D(A)$, $p \in \left(\frac{p_{0}(n-2)}{n+2}+p_{0},2\right)$ and assume that this interval is nonempty.\\
	First of all note that $\frac{2n}{n+2}<n$, since $n \neq 1$. Consequently, Sobolev's embedding theorem  yields that there is a continuous injection $W^{1,\frac{2n}{n+2}}( S ) \hookrightarrow L^{2}( S )$. So let $\tilde{C}_{ S }$ denote its operator norm.\\
	Now let $t \in (0,\infty)$ be such that $-T^{\prime}(t)u=AT(t)u$ and such that (\ref{difflemmaeq}) holds. (Clearly a.e. point in $(0,\infty)$ is a valid choice for $t$.)\\
	Note that
	\begin{align}
	\label{fancyproof1}
	0 \leq \frac{2n}{n+2}-1= \frac{1}{p_{0}}  \left( \frac{p_{0}(n-2)}{n+2}+p_{0}\right)-1<\frac{p}{p_{0}}-1=\frac{p-p_{0}}{p_{0}}
	\end{align}
	Moreover, $T(t)u \in D(A)$ yields $T(t)u \in W^{1,p}_{\gamma}( S )$ and consequently it follows by virtue of Lemma \ref{technicallemma1} and (\ref{fancyproof1}), that $T(t)u \in W^{1,\frac{2n}{n+2}}( S )$ and
	\begin{align}
	\label{fancyproof2}
	||\nabla T(t)u||^{p}_{L^{\frac{2n}{n+2}}( S ;\mathbb{R}^{n})} \leq \tilde{\Gamma}_{n,p} ||\nabla T(t)u||_{L^{p}( S ,\nu;\mathbb{R}^{n})}^{p}
	\end{align}
	and particularly that $\int \limits_{ S } \gamma^{\frac{2n}{2n-np-2p}} d\lambda < \infty$ which implies that $T^{\ast}_{u,\gamma,p,n, S }<\infty$.\\
	Now introduce 
	\begin{align*}
	\alpha_{\gamma,p,n, S }:= 2 \left(\tilde{C}_{ S }^{p} \left( C_{ S ,\frac{2n}{n+2}}^{\frac{2n}{n+2}}+1\right)^{\frac{np+2p}{2n}} \tilde{\Gamma}_{n,p} \right)^{-1},
	\end{align*}
	then
	\begin{align}
	\label{fancyproof4}
	f_{u}(t)^{\frac{p}{2}} \leq 2 \alpha_{\gamma,p,n, S }^{-1}||\nabla T(t)u||^{p}_{L^{p}( S ,\nu;\mathbb{R}^{n})},
	\end{align}
	since 
	\begin{align*}
		f_{u}(t)^{\frac{p}{2}}
		& =  ||T(t)u-\overline{(u)}_{ S }||_{L^{2}( S )}^{p} \\
		& \leq  \tilde{C}_{ S }^{p} ||T(t)u-\overline{(u)}_{ S }||_{W^{1,\frac{2n}{n+2}}( S )}^{p} \\
		& =  \tilde{C}_{ S }^{p} \left(||T(t)u-\overline{(u)}_{ S }||_{L^{\frac{2n}{n+2}}( S )}^{\frac{2n}{n+2}}+||\nabla T(t)u||_{L^{\frac{2n}{n+2}}( S ;\mathbb{R}^{n})}^{\frac{2n}{n+2}} \right)^{\frac{np+2p}{2n}} \\
		& \leq  \tilde{C}_{ S }^{p} \left( C_{ S ,\frac{2n}{n+2}}^{\frac{2n}{n+2}}||\nabla T(t)u||_{L^{\frac{2n}{n+2}}( S ;\mathbb{R}^{n})}^{\frac{2n}{n+2}}+||\nabla T(t)u||_{L^{\frac{2n}{n+2}}( S ;\mathbb{R}^{n})}^{\frac{2n}{n+2}} \right)^{\frac{np+2p}{2n}} \\
		& =  \tilde{C}_{ S }^{p} \left( C_{ S ,\frac{2n}{n+2}}^{\frac{2n}{n+2}}+1\right)^{\frac{np+2p}{2n}}||\nabla T(t)u||_{L^{\frac{2n}{n+2}}( S ;\mathbb{R}^{n})}^{p} \\ 
		& \leq  \tilde{C}_{ S }^{p} \left( C_{ S ,\frac{2n}{n+2}}^{\frac{2n}{n+2}}+1\right)^{\frac{np+2p}{2n}} \tilde{\Gamma}_{n,p} ||\nabla T(t)u||^{p}_{L^{p}( S ,\nu;\mathbb{R}^{n})} \\
		& =  2 \alpha_{\gamma,p,n, S }^{-1}||\nabla T(t)u||^{p}_{L^{p}( S ,\nu;\mathbb{R}^{n})},
	\end{align*}
	where the Sobolev embedding theorem, Poincar\'{e}'s inequality and (\ref{fancyproof2}) have been used.\\
	Consequently, (\ref{fancyproof4}) and Lemma \ref{difflemma} yield
	\begin{align*} 
	f^{\prime}_{u}(t)+\alpha_{\gamma,p,n, S } f_{u}(t)^{\frac{p}{2}} \leq -2||\nabla T(t)u||_{L^{p}( S ,\nu;\mathbb{R}^{n})}^{p}+2||\nabla T(t)u||_{L^{p}( S ,\nu;\mathbb{R}^{n})}^{p} = 0
	\end{align*}
	Conclusively, Lemma \ref{importantlemma} yields that $f_{u}(t)=0$ for all 
	\begin{align*}
	t \geq \frac{f_{u}(0)^{1-\frac{p}{2}}}{\alpha_{\gamma,p,n, S }(1-\frac{p}{2})}=T^{\ast}_{u,\gamma,p,n, S },
	\end{align*}
	which implies the claim, since $f_{u}(t)=0$ for all $t \geq T^{\ast}_{u,\gamma,p,n, S }$ clearly yields $T(t)u=\overline{(u)}_{ S }$ a.e. on $ S $ for all $t \geq T^{\ast}_{u,\gamma,p,n, S }$. 
\end{proof}

\begin{rem} Whenever $p \in \left(\frac{p_{0}(n-2)}{n+2}+p_{0},2\right) \neq \emptyset$ and $u \in L^{2}( S )$ then $T^{\ast}_{u,\gamma,p,n, S }$ and $\tilde{\Gamma}_{n,p}$ denote the constants defined in Lemma \ref{extinctionlemma}.\\
	The proof of the preceding lemma reveals that these are indeed finite.
\end{rem}

So far one only knows that $D(A)$ is a dense subset of $(L^{1}( S ),||\cdot||_{L^{1}( S )})$. This result is of course not very useful to generalize the preceding  Lemma to more general initial values than $u \in D(A)$. It will be established now that $D(A)$ is even a dense subset of $(L^{2}( S ),||\cdot||_{L^{2}( S )})$. The applied technique is the same as in \cite{mainbasedon}, Prop. 5.1. 

\begin{lem} $D(A)$ is a dense subset of $(L^{2}( S ),||\cdot||_{L^{2}( S )})$.
\end{lem}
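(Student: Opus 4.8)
The plan is to run the classical resolvent argument of nonlinear semigroup theory, using the regularizing statement of Lemma \ref{domcharlemma} to land inside $D(A)$ rather than merely $D(\mathcal{A})$. Since the $L^{2}(S)$-closure of $D(A)$ is a closed set and $L^{\infty}(S)$ is $L^{2}$-dense in $L^{2}(S)$ (as $\lambda(S)<\infty$, one has $\tau_{k}(u)\to u$ in $L^{2}(S)$ by dominated convergence for every $u\in L^{2}(S)$), it suffices to show that every $u\in L^{\infty}(S)$ lies in the $L^{2}(S)$-closure of $D(A)$.

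For $\lambda\in(0,\infty)$ set $J_{\lambda}:=(I+\lambda\mathcal{A})^{-1}$. Because $\mathcal{A}$ is m-accretive by Theorem \ref{mainold}, $J_{\lambda}$ is a well-defined single-valued map $L^{1}(S)\to D(\mathcal{A})$. Applying complete accretivity of $\mathcal{A}$ with $\alpha=\lambda$ to pairs $(x,\hat{x}),(y,\hat{y})\in\mathcal{A}$ satisfying $x+\lambda\hat{x}=f$ and $y+\lambda\hat{y}=g$ gives $J_{\lambda}f-J_{\lambda}g<<f-g$ for all $f,g\in L^{1}(S)$. Since $0\in\mathcal{A}0$ forces $J_{\lambda}0=0$, Remark \ref{infcontractionrema} yields the uniform bound
\begin{align}
\label{resolventinftybound}
\|J_{\lambda}u\|_{L^{\infty}(S)}\leq\|u\|_{L^{\infty}(S)},\qquad\forall\,u\in L^{\infty}(S),\ \lambda\in(0,\infty).
\end{align}
In particular, for $u\in L^{\infty}(S)$ one has $J_{\lambda}u\in D(\mathcal{A})\cap L^{\infty}(S)$, so Lemma \ref{domcharlemma} upgrades this to $J_{\lambda}u\in D(A)$.

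It remains to let $\lambda\downarrow0$. Since $\mathcal{A}$ is m-accretive and $D(\mathcal{A})\supseteq D(A)$ is dense in $L^{1}(S)$, the standard resolvent convergence of nonlinear semigroup theory (see \cite{BenilanBook}) gives $\lim_{\lambda\downarrow0}J_{\lambda}u=u$ in $L^{1}(S)$ for every $u\in L^{1}(S)$. Fix $u\in L^{\infty}(S)$ and a sequence $\lambda_{m}\downarrow0$; passing to a subsequence, $J_{\lambda_{m}}u\to u$ a.e. on $S$. By \eqref{resolventinftybound} this sequence is bounded in $L^{\infty}(S)$ by $\|u\|_{L^{\infty}(S)}$, so, as $\lambda(S)<\infty$, dominated convergence yields $J_{\lambda_{m}}u\to u$ in $L^{2}(S)$. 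As each $J_{\lambda_{m}}u\in D(A)$, the element $u$ lies in the $L^{2}(S)$-closure of $D(A)$, which completes the argument.

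The only delicate points are the passage from complete accretivity to the $L^{\infty}$-contractivity \eqref{resolventinftybound} and the $L^{1}$-convergence of the resolvents; both are standard consequences of the structure recorded in Theorem \ref{mainold}. The key idea is simply that the uniform $L^{\infty}$-bound \eqref{resolventinftybound} lets one interpolate the $L^{1}$-convergence up to $L^{2}$-convergence via dominated convergence on the finite-measure set $S$.
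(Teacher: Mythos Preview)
Your argument is correct and shares the paper's core construction: approximate a given $u\in L^{\infty}(S)$ by resolvents $J_{\lambda}u=(I+\lambda\mathcal{A})^{-1}u$, use complete accretivity to get the $L^{\infty}$-bound, and invoke Lemma \ref{domcharlemma} to land in $D(A)$. Where you diverge is in the convergence step. The paper works ``by hand'': it extracts a weak $L^{2}$-limit $\tilde{h}$ of the resolvents, uses the explicit integral form of $A$ to show that $\frac{1}{m}\int_{S}\gamma|\nabla f_{m}|^{p-2}\nabla f_{m}\cdot\nabla\varphi\,d\lambda\to 0$, identifies $\tilde{h}=h$, and then upgrades weak to strong convergence via $\|f_{m}\|_{L^{2}}\leq\|h\|_{L^{2}}$ and uniform convexity of $L^{2}(S)$. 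You instead cite the abstract resolvent convergence $J_{\lambda}u\to u$ in $L^{1}(S)$ for m-accretive operators with dense domain, pass to an a.e.-convergent subsequence, and use the $L^{\infty}$-bound plus dominated convergence on the finite-measure set $S$ to get $L^{2}$-convergence directly. Your route is shorter and fully operator-independent; the paper's route is self-contained (it does not import the general resolvent-convergence theorem) but requires the specific structure of $A$ and the weak-to-strong machinery.
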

\begin{proof} It suffices to prove that there is for each $h \in L^{\infty}( S )$ a sequence $(f_{m})_{m \in \mathbb{N}} \subseteq D(A)$ such that
	\begin{align*}
	\lim \limits_{m \rightarrow \infty}f_{m}=h~\text{in } L^{2}( S ),
	\end{align*}
	since $L^{\infty}( S )$ is a dense subspace of $L^{2}( S )$.\\
	Let $h \in L^{\infty}( S )$ be arbitrary but fixed.\\
	Since $\mathcal{A}$  is  m-accretive there are for each $m \in \mathbb{N}$ functions $f_{m}\in D(\mathcal{A})$, $\hat{f}_{m}\in \mathcal{A}f_{m}$, such that
	\begin{align}
	\label{basicallyfancyproof2}
	h=f_{m}+\frac{1}{m}\hat{f}_{m}\text{ a.e. on }  S 
	\end{align}
	for all $m \in \mathbb{N}$.\\
	By complete accretivity one obtains $f_{m}<<f_{m}+\frac{1}{m}\hat{f}_{m}$ and consequently $f_{m}<<h$ for all $m \in \mathbb{N}$, which yields
	\begin{align}
	\label{basicallyfancyproof4}
	||f_{m}||_{L^{\infty}( S )} \leq ||h||_{L^{\infty}( S )}<\infty,~\forall m \in \mathbb{N}.
	\end{align}
	Consequently $f_{m} \in L^{\infty}( S )$ and therefore $f_{m} \in D(A)$ for all $m \in \mathbb{N}$.\\ 
	Moreover, (\ref{basicallyfancyproof4}) also implies that the sequence $(||f_{m}||_{L^{2}( S )})_{m \in \mathbb{N}}$ is bounded. Hence, by passing to a subsequence if necessary, there is an $\tilde{h}\in L^{2}( S )$ such that
	\begin{align}
	\label{basicallyfancyproof8}
	\wlim \limits_{m \rightarrow \infty} f_{m}=\tilde{h}~\text{in } L^{2}( S ).
	\end{align}
	Now observe that
	\begin{align}
	\label{basicallyfancyproof1}
	\lim \limits_{m \rightarrow \infty} \frac{1}{m}\int \limits_{ S } \gamma |\nabla f_{m}|^{p-2} \nabla f_{m}\cdot\nabla \varphi d \lambda=0,~\forall \varphi \in W^{1,p}_{\gamma}( S )\cap L^{\infty}( S ),
	\end{align}
	since one obtains for all $\varphi \in W^{1,p}_{\gamma}( S )\cap L^{\infty}( S )$ and $q:=\frac{p}{p-1}$ that
	\begin{align*}
		&  
		\left|\left(\frac{1}{m}\right)^{\frac{1}{q}}\int \limits_{ S } \gamma |\nabla f_{m}|^{p-2} \nabla f_{m}\cdot\nabla \varphi d \lambda\right|\\ 
		& \leq  \left(\frac{1}{m}\right)^{\frac{1}{q}}\left(\int \limits_{ S } \gamma |\nabla f_{m}|^{p-2} \nabla f_{m}\cdot\nabla f_{m}d\lambda \right)^{\frac{1}{q}} ||\nabla \varphi||_{L^{p}( S ,\nu;\mathbb{R}^{n})}\\
		& =  \left(\int \limits_{ S } (h-f_{m})f_{m} d \lambda \right)^{\frac{1}{q}}||\nabla \varphi||_{L^{p}( S ,\nu;\mathbb{R}^{n})}\\
		& \leq \left(\int \limits_{ S } (||h||_{L^{\infty}( S )}+||h||_{L^{\infty}( S )})||h||_{L^{\infty}( S )} d \lambda \right)^{\frac{1}{q}} ||\nabla \varphi||_{L^{p}( S ,\nu;\mathbb{R}^{n})}\\
		& =  \left(2\lambda( S )||h||^{2}_{L^{\infty}( S )} \right)^{\frac{1}{q}} ||\nabla \varphi||_{L^{p}( S ,\nu;\mathbb{R}^{n})},
	\end{align*}
	where Cauchy Schwarz inequality, H\"older's inequality, $\hat{f}_{m}=Af_{m}$, (\ref{basicallyfancyproof2}) and (\ref{basicallyfancyproof4}) were used.\\
	Moreover, (\ref{basicallyfancyproof1}) yields 
	\begin{align*}
		\int \limits_{ S } (h-\tilde{h})\varphi d \lambda
		& =  \lim \limits_{m \rightarrow \infty}\int \limits_{ S } (h-f_{m})\varphi d \lambda\\
		& =  \lim \limits_{m \rightarrow \infty}\int \limits_{ S } \frac{1}{m}\hat{f}_{m}\varphi d \lambda\\  
		& =  \lim \limits_{m \rightarrow \infty}\frac{1}{m} \int \limits_{ S } \gamma |\nabla f_{m}|^{p-2} \nabla f_{m}\cdot\nabla \varphi d \lambda\\
		& =  0.
	\end{align*}
	for all $\varphi \in  W^{1,p}_{\gamma}( S ) \cap L^{\infty}( S )$ and therefore $h=\tilde{h}$.\\
	It is clear that $||f_{m}||_{L^{2}( S )}\leq||f_{m}+\frac{1}{m}\hat{f}_{m}||_{L^{2}( S )}$ and consequently one gets $||f_{m}||_{L^{2}( S )} \leq ||h||_{L^{2}( S )}=||\tilde{h}||_{L^{2}( S )}$ for all $m \in \mathbb{N}$, which implies particularly that
	\begin{align*}
	\limsup_{m \rightarrow \infty} ||f_{m}||_{L^{2}( S )} \leq ||\tilde{h}||_{L^{2}( S )}. 
	\end{align*}
	Conclusively this, (\ref{basicallyfancyproof8}) and the uniform convexity of the Banach space $L^{2}( S )$ yield $\lim \limits_{m \rightarrow \infty} f_{m}=\tilde{h}=h,~\text{in } L^{2}( S )$.
\end{proof}

\begin{thm} Let $u \in L^{2}( S )$ and assume that the interval $\left(\frac{p_{0}(n-2)}{n+2}+p_{0},2\right)$ is nonempty. Moreover, assume $p \in \left(\frac{p_{0}(n-2)}{n+2}+p_{0},2\right)$, then    
	\begin{align*}
	T(t)u=\overline{(u)}_{ S } \text{ a.e. on }  S 
	\end{align*}
	for all $t \geq T^{\ast}_{u,\gamma,p,n, S }$. 
\end{thm}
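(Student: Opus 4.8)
The plan is to upgrade the extinction result of Lemma~\ref{extinctionlemma}, which is available only for $u \in D(A)$, to arbitrary $u \in L^{2}( S )$ by a density argument. The two ingredients are the density of $D(A)$ in $(L^{2}( S ),||\cdot||_{L^{2}( S )})$ established in the preceding lemma, and the $L^{2}$-contractivity of the semigroup. To obtain the latter I would first note that Lemma~\ref{nicealemma} gives $T(t)u-T(t)v<<u-v$ for all $u,v\in L^{1}( S )$ and $t\geq 0$; choosing $j(x):=|x|^{2}$, which belongs to $J_{0}$, in the definition of $<<$ then yields
\begin{align*}
||T(t)u-T(t)v||_{L^{2}( S )}\leq ||u-v||_{L^{2}( S )},~\forall u,v \in L^{2}( S ),~t\geq 0.
\end{align*}

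Next, given $u \in L^{2}( S )$, I would pick a sequence $(u_{m})_{m\in\mathbb{N}}\subseteq D(A)$ with $\lim_{m\to\infty}u_{m}=u$ in $L^{2}( S )$. Since $u_{m}\to u$ in $L^{2}( S )$ one has $\overline{(u_{m})}_{ S }\to \overline{(u)}_{ S }$ and in particular $||u_{m}-\overline{(u_{m})}_{ S }||^{2}_{L^{2}( S )}=f_{u_{m}}(0)\to f_{u}(0)=||u-\overline{(u)}_{ S }||^{2}_{L^{2}( S )}$. Because the only $u$-dependence of $T^{\ast}_{u_{m},\gamma,p,n, S }$ enters through the factor $f_{u_{m}}(0)^{1-\frac{p}{2}}$, and because $1-\frac{p}{2}>0$ (as $p<2$) makes $x\mapsto x^{1-\frac{p}{2}}$ continuous on $[0,\infty)$, it follows that $T^{\ast}_{u_{m},\gamma,p,n, S }\to T^{\ast}_{u,\gamma,p,n, S }$.

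Now fix $t>T^{\ast}_{u,\gamma,p,n, S }$. For all sufficiently large $m$ one has $T^{\ast}_{u_{m},\gamma,p,n, S }<t$, so Lemma~\ref{extinctionlemma} gives $T(t)u_{m}=\overline{(u_{m})}_{ S }$ a.e.\ on $ S $. Combining this with the contraction estimate and the triangle inequality,
\begin{align*}
||T(t)u-\overline{(u)}_{ S }||_{L^{2}( S )}
\leq ||T(t)u-T(t)u_{m}||_{L^{2}( S )}+||\overline{(u_{m})}_{ S }-\overline{(u)}_{ S }||_{L^{2}( S )}
\leq ||u-u_{m}||_{L^{2}( S )}+\sqrt{\lambda( S )}\,|\overline{(u_{m})}_{ S }-\overline{(u)}_{ S }|,
\end{align*}
and letting $m\to\infty$ forces $T(t)u=\overline{(u)}_{ S }$ a.e.\ for every $t>T^{\ast}_{u,\gamma,p,n, S }$. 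Finally, the boundary value $t=T^{\ast}_{u,\gamma,p,n, S }$ would be handled by taking $t_{k}\downarrow T^{\ast}_{u,\gamma,p,n, S }$ with $t_{k}>T^{\ast}_{u,\gamma,p,n, S }$ and invoking the continuity of $T(\cdot)u:[0,\infty)\to (L^{1}( S ),||\cdot||_{L^{1}( S )})$ from Theorem~\ref{mainold}, so that $\overline{(u)}_{ S }=T(t_{k})u\to T(T^{\ast}_{u,\gamma,p,n, S })u$ in $L^{1}( S )$.

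The main subtlety I would flag is precisely the passage through the approximating extinction times: the $T^{\ast}_{u_{m},\gamma,p,n, S }$ converge to $T^{\ast}_{u,\gamma,p,n, S }$ but need not be bounded above by $t$ at the critical time, so the argument must be run with the \emph{strict} inequality $t>T^{\ast}_{u,\gamma,p,n, S }$ (which lets one absorb all large $m$), with the equality case recovered separately by continuity of the orbit. Everything else is routine, relying only on the contractivity and density results already in hand.
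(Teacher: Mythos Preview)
Your argument is correct. The paper, however, avoids your two-step treatment (strict inequality plus boundary case) by a rescaling trick: starting from any $(v_{m})\subseteq D(A)$ with $v_{m}\to u$ in $L^{2}(S)$ and $v_{m}$ non-constant, it sets
\[
u_{m}:=\frac{||u-\overline{(u)}_{S}||_{L^{2}(S)}}{||v_{m}-\overline{(v_{m})}_{S}||_{L^{2}(S)}}\,v_{m},
\]
so that $||u_{m}-\overline{(u_{m})}_{S}||_{L^{2}(S)}=||u-\overline{(u)}_{S}||_{L^{2}(S)}$ and hence $T^{\ast}_{u_{m},\gamma,p,n,S}=T^{\ast}_{u,\gamma,p,n,S}$ \emph{exactly} for every $m$. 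Since $\alpha f\in D(A)$ whenever $f\in D(A)$ and $\alpha>0$, Lemma~\ref{extinctionlemma} applies directly to each $u_{m}$ for all $t\geq T^{\ast}_{u,\gamma,p,n,S}$, and the limit passage (via the same $L^{2}$-contractivity you use) gives the conclusion in one stroke. Your route trades this normalization for the observation that $T^{\ast}_{u_{m}}\to T^{\ast}_{u}$ and an extra continuity argument at the endpoint; both are sound, and yours is arguably the more natural density argument, while the paper's is slightly slicker because it never has to distinguish $t>T^{\ast}_{u}$ from $t=T^{\ast}_{u}$.
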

\begin{proof} Let $u \in L^{2}( S )$ be given and assume that $u$ is not constant a.e. on $ S $. (If $u$ is constant the claim is trivial.)\\ 
	Now let $(v_{m})_{m \in \mathbb{N}} \subseteq D(A)$ be such that $\lim \limits_{m \rightarrow \infty}v_{m}=u$ in $L^{2}( S )$ and assume that none of the $v_{m}$ is constant a.e. on $ S $.\\
	Moreover, introduce $(u_{m})_{m \in \mathbb{N}}$ by
	\begin{align}
	u_{m}:= \frac{||u-\overline{(u)}_{ S }||_{L^{2}( S )}}{||v_{m}-\overline{(v_{m})}_{ S }||_{L^{2}( S )}}v_{m},~\forall m \in \mathbb{N}.
	\end{align}
	It is clear that $\lim \limits_{m \rightarrow \infty}u_{m}=u$ in $L^{2}( S )$ and that $T^{\ast}_{u_{m},\gamma,p,n, S }=T^{\ast}_{u,\gamma,p,n, S }$ for all $m \in \mathbb{N}$.\\
	Observe that also $u_{m} \in D(A)$ for all $m \in \mathbb{N}$. (Generally if $(f,\hat{f}) \in A$ then $(\alpha f, \alpha^{p-1}\hat{f}) \in A$ for any $\alpha >0$.)\\
	Consequently Lemma \ref{extinctionlemma} yields $T(t)u_{m}=\overline{(u_{m})}_{ S }$ a.e. on $ S $ for every \linebreak$t \geq T^{\ast}_{u,\gamma,p,n, S }$.\\
	Finally observe that$\lim \limits_{m \rightarrow \infty}T(t)u_{m}=T(t)u$ in $L^{2}( S )$ for any $t \in [0,\infty)$, which clearly implies the claim.
\end{proof}

Using the preceding result and Theorem \ref{mainuniform} one obtains the following corollary for the case $n=2$ and $p_{0}=1$ which concludes this paper.\\
Note that this corollary is applicable for any $p \in (1,\infty) \setminus \{2\}$, i.e. for any value of $p$ for which the existence of unique strong solutions of (\ref{mainoldeq}) is proven.

\begin{cor} Assume $n=2$ and $p_{0}=1$, then
	\begin{align*}
	T(t)u=\overline{(u)}_{ S } \text{ a.e. on }  S ,
	\end{align*}
	for all $t \geq T^{\ast}_{u,\gamma,p,2, S }$, if $p \in (1,2)$ and $u \in L^{2}( S )$.\\
	Moreover, if $p \in (2,\infty)$ and $u \in L^{p}( S )$, then
	\begin{align*} 
	||T(t)u-\overline{(u)}_{ S }||_{L^{\infty}( S )} \leq C^{\ast}_{ S ,\delta} \Gamma_{\delta,p} ||u-\overline{(u)}_{ S }||^{\frac{2}{p}}_{L^{2}( S )} \left(\frac{1}{t}\right)^{\frac{1}{p}}
	\end{align*}
	for every $t \in (0,\infty)$ and $\delta \in (1,p-1)$.
\end{cor}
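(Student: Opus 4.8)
The plan is to recognize that this corollary is nothing more than the specialization of the two preceding extinction/decay results to the particular values $n=2$ and $p_{0}=1$; essentially all the work lies in checking that these values reduce the general hypotheses of the preceding theorem and of Theorem \ref{mainuniform} precisely to the stated ranges for $p$ and $\delta$. Accordingly I would split the argument into the two regimes $p \in (1,2)$ and $p \in (2,\infty)$ and invoke the appropriate previously established result in each.

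First I would treat the case $p \in (1,2)$. Substituting $n=2$ and $p_{0}=1$ into the left endpoint of the interval $\left(\frac{p_{0}(n-2)}{n+2}+p_{0},2\right)$ appearing in the extinction theorem immediately preceding this corollary, I note that $\frac{p_{0}(n-2)}{n+2}=\frac{1\cdot(2-2)}{2+2}=0$, so this interval becomes exactly $(1,2)$. Hence it is nonempty, and every $p\in(1,2)$ satisfies the hypothesis $p\in\left(\frac{p_{0}(n-2)}{n+2}+p_{0},2\right)$. Therefore the extinction theorem applies verbatim for any $u\in L^{2}(S)$ and yields $T(t)u=\overline{(u)}_{S}$ a.e. on $S$ for all $t\geq T^{\ast}_{u,\gamma,p,2,S}$, which is the first assertion.

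Second I would treat the case $p \in (2,\infty)$. With $n=2$ and $p_{0}=1$, the hypothesis $p_{0}<\frac{p}{n}$ of Theorem \ref{mainuniform} reads $1<\frac{p}{2}$, i.e.\ $p>2$, which holds throughout this regime; and the admissible range $n-1<\delta<\frac{p-p_{0}}{p_{0}}$ collapses to $1<\delta<p-1$, i.e.\ $\delta\in(1,p-1)$. Thus for $u\in L^{p}(S)$ and any $\delta\in(1,p-1)$ the full conclusion of Theorem \ref{mainuniform} is available and produces exactly the displayed $L^{\infty}$-decay estimate with the constant $C^{\ast}_{S,\delta}\Gamma_{\delta,p}$ for every $t\in(0,\infty)$.

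The only genuine point to verify—really a sanity check rather than an obstacle—is the arithmetic confirming that the two parameter windows $(1,2)$ and $(2,\infty)$, together with the interval $\delta\in(1,p-1)$, are precisely what the general hypotheses produce upon inserting $n=2$ and $p_{0}=1$. Once that substitution is carried out, the two halves of the statement follow directly from the cited results and no further estimation is required.
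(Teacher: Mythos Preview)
Your proposal is correct and matches the paper's approach exactly: the paper gives no detailed proof for this corollary but merely states that it follows from the preceding extinction theorem and Theorem \ref{mainuniform} in the special case $n=2$, $p_{0}=1$. Your verification that the parameter windows reduce correctly is precisely what is needed, and nothing more.
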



\subsection*{Acknowledgment}
The present author is grateful to Prof. Dr. Wolfgang Arendt as well as to Prof. Dr. Evgeny Spodarev for their advices

\end{document}